\numberwithin{equation}{section}
\definecolor{citegreen}{rgb}{0,0.6,0}
\definecolor{refred}{rgb}{0.8,0,0}
\newcommand{\R}{\mathbb{R}}
\newcommand{\pa}{\partial}
\newcommand{\Om}{\Omega}
\newcommand{\ffi}{\varphi}
\newcommand{\ep}{\varepsilon}
\newcommand{\rmd}{{\rm d}}
\newcommand{\D}{\nabla}
\newcommand{\DD}{\nabla^2}
\newcommand{\De}{\Delta}
\newcommand{\na}{\nabla}
\newcommand{\nana}{\nabla^2}
\newcommand{\A}{L}
\newcommand{\B}{M}
\mathchardef\emptyset="001F
\definecolor{vgreen}{rgb}{0.1,0.5,0.2}
\definecolor{viola}{RGB}{85,26,139}
\newtheorem{theorem}{Theorem}[section]
\newtheorem{remark}{Remark}
\newtheorem{proposition}[theorem]{Proposition}
\newtheorem{lemma}[theorem]{Lemma}
\begin{document}

\title[Symmetry results for Serrin-type problems in doubly connected domains]
{Symmetry results for Serrin-type problems\\ in doubly connected domains}

\author[S.~Borghini]{Stefano Borghini}
\address{S.~Borghini, Universit\~a degli Studi di Milano--Bicocca, Via Roberto Cozzi 55, 20126 Milano, Italia}
\email{stefano.borghini@unimib.it}

\begin{abstract} 
In this work, we employ the technique developed in~\cite{ABM} to prove rotational symmetry for a class of Serrin-type problems for the standard laplacian. We also discuss in some length how our strategy compares with the classical moving plane method.
\end{abstract}

\maketitle

\section{Introduction and statement of the main results}

In his pioneering paper~\cite{Serrin}, Serrin refined Alexandrov's method of moving planes~\cite{Alexandrov} and employed it to prove that a function defined in a bounded domain having constant nonzero laplacian and satisfying constant Dirichlet and Neumann boundary condition is necessarily rotationally symmetric. 
In fact, in~\cite{Serrin} it was shown that similar results hold more generally for a vast class of elliptic PDEs.
Serrin's result and the method of moving planes have given rise to a prolific field of research on overdetermined boundary value problems. Nowadays Serrin's technique has been employed to study analogous problems in a number of work, among which we mention~\cite{Barrios_Montoro_Sciunzi,
Brandolini_Nitsch_Salani_Trombetti,Gidas_Ni_Nirenberg,Ma_Liu}.
An alternative proof of Serrin's result was given by Weinberger~\cite{Weinberger} using a completely different method, essentially based on a comparison argument with the model solutions.
 Weinberger's method and in particular its use of a P-function has also experienced quite some success~\cite{Ciraolo_Vezzoni,Farina_Kawohl,
 Farina_Valdinoci,Garofalo_Lewis,Roncoroni}. For further insights on these methods and their applications, see also~\cite{Magnanini} and references therein.

The above mentioned papers all gave rise to interesting characterizations of solutions supported in a ball.
In this work, we are instead interested in characterizations of solutions supported in an annulus. Let us explain our setting in more details.
Let $E_i\Subset E_o\subset\R^2$ be two nonempty simply connected bounded domains, whose boundaries $\pa E_o=\Gamma_o$ and $\pa E_i=\Gamma_i$ are simple closed smooth curves, and let $\Omega=E_o\setminus \overline{E_i}$. By contruction, $\Omega$ is a bounded domain that is not simply connected, with boundary
$$
\pa\Omega\,=\,\Gamma_i\sqcup\Gamma_o\,.
$$
%
%
We are interested in studying pairs $(\Om,u)$, where $\Omega$ has the above structure and $u:\Omega\to\R$ is a solution to the following problem

\begin{equation}
\label{eq:problem}
\begin{dcases}
\Delta u=-2, & \mbox{in } \Om\,,\\
u=a\,,\ \ \frac{\pa u}{\pa\nu}=\alpha, & \mbox{on } \Gamma_i\,,
\\
u=b\,,\ \ \frac{\pa u}{\pa\nu}=\beta, & \mbox{on } \Gamma_o\,,
\\
\end{dcases}
\end{equation}
where $a,b,\alpha,\beta$ are constants and $\nu$ is the unit-normal pointing outside $\Omega$. 
This overdetermined boundary value problem has been well studied, especially in the case where $a>b$ and $\alpha\geq 0$. Under this assumption, the method of moving planes shows its strength and allows to prove some powerful rigidity results. A particularly nice one is the following, due to Reichel. 

\begin{theorem}[{\cite[Theorem~2]{Reichel2}}]
\label{thm:reichel}
Let $(\Omega,u)$ be a solution to problem~\eqref{eq:problem} such that $a>b$ and $b<u<a$ in $\Omega$. Then $\Omega$ is an annulus and $u$ is rotationally symmetric with $\pa u/\pa |x|<0$.
\end{theorem}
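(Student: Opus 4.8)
The plan is to follow Reichel's original route --- the Alexandrov--Serrin method of moving planes adapted to the doubly connected setting --- and I indicate at the end how one might bypass it via the integral identities developed below. I would start by recording the elementary consequences of the hypotheses. Since $\Delta u=-2<0$, $u$ is strictly superharmonic, so by the strong minimum principle its minimum over $\overline\Omega$ is attained only on $\partial\Omega$, hence (as $a>b$) only on $\Gamma_o$, and $u>b$ throughout $\Omega\cup\Gamma_i$. Combined with the standing assumption $u<a$ in $\Omega$, this shows that $\max_{\overline\Omega}u=a$ is attained exactly on $\Gamma_i$ and $\min_{\overline\Omega}u=b$ exactly on $\Gamma_o$. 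Hopf's boundary point lemma at $\Gamma_o$, where $u$ has a strict minimum, then gives $\beta=\partial u/\partial\nu<0$ on $\Gamma_o$; and since $u<a=u|_{\Gamma_i}$, so that $u$ decreases as one moves from $\Gamma_i$ into $\Omega$, we get $\alpha=\partial u/\partial\nu\ge0$ on $\Gamma_i$.

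Next comes the moving plane step. Fix a unit vector $e\in\Sph^1$; for $\lambda\in\R$ let $T_\lambda=\{x\cdot e=\lambda\}$, let $R_\lambda$ be the reflection across $T_\lambda$, put $\Sigma_\lambda=\{x\in\Omega:x\cdot e>\lambda\}$ and $\Sigma_\lambda'=R_\lambda(\Sigma_\lambda)$, and set $w_\lambda=u\circ R_\lambda-u$ on $\Sigma_\lambda'\cap\Omega$; since $\Delta(u\circ R_\lambda)=-2=\Delta u$, $w_\lambda$ is harmonic. Starting from $\lambda$ just below $\max_{\overline{E_o}}(x\cdot e)$, the cap $\Sigma_\lambda$ is a thin region near the rightmost point of $\Gamma_o$, disjoint from $\overline{E_i}$; there $\Sigma_\lambda'\subset\Omega$, and on $\partial\Sigma_\lambda'$ one has $w_\lambda=0$ on the part lying on $T_\lambda$ and $u\circ R_\lambda=b=\min_{\overline\Omega}u\le u$ on the reflection of $\Gamma_o\cap\partial\Sigma_\lambda$, so $w_\lambda\le0$ on $\partial\Sigma_\lambda'$; by the maximum principle $w_\lambda\le0$ in $\Sigma_\lambda'$, and by the strong maximum principle and Hopf's lemma the inequality is strict unless $\Omega$ is already symmetric across $T_\lambda$. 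One decreases $\lambda$ as long as this configuration survives and lets $\lambda^*(e)$ be the infimum of admissible values; by continuity, at $\lambda=\lambda^*(e)$ either the reflected cap is internally tangent to $\partial\Omega$ at a point off $T_{\lambda^*}$ or $T_{\lambda^*}$ meets $\partial\Omega$ orthogonally, and in either case applying the strong maximum principle (resp.\ Serrin's corner lemma) to $w_{\lambda^*}\le0$ at the contact point forces $w_{\lambda^*}\equiv0$, i.e.\ $(\Omega,u)$ is symmetric about the line $T_{\lambda^*(e)}$.

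The step I expect to be the main obstacle --- the one genuinely new relative to Serrin's ball case --- is showing that this procedure actually reaches symmetry and is not stalled by the hole. As soon as $\lambda$ drops below $\max_{\overline{E_i}}(x\cdot e)$, the cap $\Sigma_\lambda$ starts to contain arcs of the inner curve $\Gamma_i$, and $\Sigma_\lambda'$ may touch or enter $\overline{E_i}$, so one has to track both the topology of $\Sigma_\lambda$ and $\Sigma_\lambda'$ and the sign of $w_\lambda$ on each new boundary arc. This is exactly where the hypothesis $b<u<a$ is indispensable: on an arc of $\Gamma_i$ swept over by $\Sigma_\lambda'$ one has $u=a=\max_{\overline\Omega}u\ge u\circ R_\lambda$, so still $w_\lambda\le0$, and because $a$ is attained only on $\Gamma_i$ (and $b$ only on $\Gamma_o$) the only arcs on which $w_\lambda$ could carry the wrong sign are reflections of $\Gamma_i$-arcs lying in the region occupied by $E_i$, which then do not bound $\Sigma_\lambda'\cap\Omega$ --- a claim that still requires careful bookkeeping as $\Gamma_i$ and $\Gamma_o$ come into play and which is the heart of Reichel's argument. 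Granting it, one has a line of symmetry $T_{\lambda^*(e)}$ of $(\Omega,u)$ for every $e\in\Sph^1$, and the conclusion is classical: two orthogonal directions pin down a point $x_0$ lying on both of their symmetry lines, and if some $T_{\lambda^*(e)}$ missed $x_0$ the usual composition-of-reflections argument would yield a nontrivial translational symmetry of the bounded set $\Omega$, which is impossible; hence every $T_{\lambda^*(e)}$ passes through $x_0$ and $(\Omega,u)$ is invariant under all rotations about $x_0$. Thus $\Omega$ is an annulus $\{r_i<|x-x_0|<r_o\}$ and $u=u(r)$ with $r=|x-x_0|$; integrating $u''+u'/r=-2$ gives $u(r)=-r^2/2+A\log r+B$, and $\alpha=-u'(r_i)\ge0$ forces $A\le r_i^2$, whence $u'(r)=(A-r^2)/r<0$ for $r_i<r\le r_o$, i.e.\ $\partial u/\partial|x|<0$ in $\Omega$. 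Alternatively, one could skip the moving plane bookkeeping altogether and obtain Theorem~\ref{thm:reichel} from the $P$-function and integral-identity method that is the subject of the rest of this paper.
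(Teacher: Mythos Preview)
The paper does not actually prove Theorem~\ref{thm:reichel}: it is quoted from Reichel's work as a known result, and the paper's own contribution in the case $a>b$ is the strictly weaker Theorem~\ref{thm:main2}, which requires the extra hypothesis $2a+\alpha^2\le 2b+\beta^2$. Your outline via the moving plane method is therefore the right route---it is Reichel's original one---and the preliminary observations ($\alpha\ge 0$, $\beta<0$, location of extrema) match Proposition~\ref{pro:acceptable}. The moving-plane portion is admittedly a sketch (you flag the tracking of the reflected cap against the inner boundary as ``the heart of Reichel's argument'' and then grant it), but as a plan it is sound.

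Your final sentence, however, is wrong and should be removed. You write that ``one could skip the moving plane bookkeeping altogether and obtain Theorem~\ref{thm:reichel} from the $P$-function and integral-identity method that is the subject of the rest of this paper.'' The paper says precisely the opposite: the comparison method developed here works cleanly only for $a<b$ (Theorem~\ref{thm:main}); in the case $a>b$ it yields only Theorem~\ref{thm:main2}, under the additional constraint $2a+\alpha^2\le 2b+\beta^2$, and Section~\ref{sec:comments} is devoted to explaining why the method stalls without that constraint (the model solution of Lemma~\ref{le:comparison2} may fail to exist, and for $M<0$ the elliptic inequality~\eqref{eq:elliptic_ineq_F_simplified} has the wrong sign so the Maximum Principle does not apply). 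So the integral-identity approach is not, at present, an alternative to moving planes for the full statement of Theorem~\ref{thm:reichel}.
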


It is worth mentioning that Reichel's result is slightly stronger, as it also works for more general PDEs and it allows $E_i$ (and thus $\Gamma_i$) to be disconnected.
In~\cite{Sirakov}, Sirakov generalized previous
results~\cite{Aftalion_Busca,Alessandrini,
Willms_Gladwell_Siegel} and proved that Reichel's thesis remains true if one allows different values of $a$ and $\alpha$ on different connected components of $\Gamma_i$.
He was also able to replace the hypothesis $u<a$ with the weaker $\alpha\geq 0$.
We also remark that similar symmetry results have been proven to hold for more general families of elliptic operators. In~\cite{Sirakov} it is shown that the same result works for quasi-linear regular strongly elliptic operator. Similar results also hold for quasi-linear possibly degenerate elliptic operators~\cite{Alessandrini}, fully nonlinear operators~\cite{Reichel3} and for the fractional laplacian~\cite{Soave_Valdinoci}. For further symmetry results on this and related problems, see also~\cite{Barbu_Enache,Ciraolo_Vezzoni,Enciso_Peralta-Salas,Payne_Philippin,Reichel1}. 

On the other hand, it seems that less attention has been brought to the case $a<b$. One possible explaination is that the moving plane method works well when the model solution are monotonically decreasing, but seems to be harder to employ when the model solutions are monotonically increasing. 
In this work, our aim is to discuss this case and to prove the analogue of Theorem~\ref{thm:reichel}, namely:

\begin{theorem}
\label{thm:main}
Let $(\Omega,u)$ be a solution to problem~\eqref{eq:problem} such that $a<b$ and $a<u<b$ in $\Omega$. Then $\Omega$ is an annulus and $u$ is rotationally symmetric with $\pa u/\pa |x|>0$.
\end{theorem}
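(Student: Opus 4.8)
The plan is to run a monotonicity--rigidity argument in the style of~\cite{ABM}, adapted to the doubly connected setting. We begin with a few preliminary remarks. Since $u-a$ is superharmonic, nonnegative and vanishes on $\Gamma_i$, while $b-u$ is subharmonic, nonnegative and vanishes on $\Gamma_o$, the Hopf boundary point lemma gives $\alpha<0$ on $\Gamma_i$ and $\beta>0$ on $\Gamma_o$, and integrating the equation over $\Omega$ yields $\alpha|\Gamma_i|+\beta|\Gamma_o|=-2|\Omega|$. In dimension two the function $w:=u+\tfrac12|x|^2$ is \emph{harmonic}; the radial models on an annulus $\{r_i\le|x|\le r_o\}$ are $w=C\log|x|+D$, that is $u=-\tfrac12|x|^2+C\log|x|+D$ with $C>r_o^2$ (for which $\partial u/\partial|x|=(C-|x|^2)/|x|>0$), and the thesis is precisely that $(\Omega,u)$ is, up to a rigid motion, one of these. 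Finally, by elliptic regularity $u$ is real analytic in $\Omega$, its critical set is a finite union of points and analytic arcs, and all but finitely many level sets $\{u=t\}$, $t\in(a,b)$, are finite unions of smooth closed curves which, away from the critical set, foliate $\Omega$ and join $\Gamma_i$ to $\Gamma_o$ along the flow of $\nabla u/|\nabla u|^2$.

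The main tool is the Weinberger-type function $P:=|\nabla u|^2+2u$ together with a level-set monotonicity formula. The Bochner identity and the two-dimensional splitting $|\nabla^2u|^2=\tfrac12(\Delta u)^2+|\mathring{\nabla^2u}|^2$, where $\mathring{\nabla^2u}:=\nabla^2u-\tfrac12(\Delta u)\,g$, give $\Delta P=2|\mathring{\nabla^2u}|^2\ge0$, so $P$ is subharmonic. Following~\cite{ABM}, one attaches to the foliation a suitable one-parameter family of integrals over the level sets --- schematically $F(t):=\int_{\{u=t\}}\Psi(u,|\nabla u|)\,d\sigma$, with the weight $\Psi$ dictated by the radial model --- and shows, by differentiating in $t$ and combining the divergence theorem on the superlevel sets $\{u>t\}$ with the subharmonicity of $P$ (and Kato- and Cauchy--Schwarz-type inequalities), that $F$ is monotone in $t$. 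The decisive feature is that the equality case is \emph{not} the rigid ``$u$ is a quadratic polynomial'' but the much milder ``$|\nabla u|$ is constant on each level set'' (equivalently, $P$ is constant on each $\{u=t\}$).

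It then remains to evaluate $F$ at the two endpoints. As $t\to a^+$ the level sets converge to $\Gamma_i$ and as $t\to b^-$ to $\Gamma_o$; since $u$ and $|\nabla u|$ are constant on those curves, the overdetermined data determine $F(a^+)$ and $F(b^-)$, and one checks that these endpoint values are compatible with the monotone behaviour of $F$ only if $F$ is identically constant. Hence $|\nabla u|=\phi(u)$ for some positive function $\phi$. The reparametrization $v=V(u)$ with $V'(s)=1/\phi(s)$ then satisfies the eikonal equation $|\nabla v|=1$, while $\Delta v=-\phi'(u)-2/\phi(u)$ depends on $v$ alone; consequently the level sets of $v$, which coincide with those of $u$, form an equidistant family of curves of constant curvature, i.e.\ concentric circles. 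Therefore $u$ is radial; placing the common centre at the origin, $u$ solves $u''+u'/|x|=-2$, and the inner and outer Dirichlet--Neumann data identify it with the model solution above with $C>r_o^2$. In particular $\Omega=\{r_i\le|x|\le r_o\}$ is an annulus and $\partial u/\partial|x|>0$, as claimed.

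The heart of the argument --- and the reason the classical moving-plane and Weinberger methods are awkward here --- is the monotonicity of $F$ and the endpoint computation. For Serrin's ball problem the operative rigidity is ``$u$ is a quadratic polynomial'', read off by integrating $|\mathring{\nabla^2u}|^2\ge0$ against a Pohozaev multiplier, using that $P$ is constant on the single boundary component; in the doubly connected case the values of $P$ on $\Gamma_i$ and $\Gamma_o$ need not coincide, a concave paraboloid cannot meet two nested level curves with constant Cauchy data, and the Bochner deficit $|\mathring{\nabla^2u}|^2$ is genuinely nonzero along the model --- so one must instead extract the weaker ``$|\nabla u|$ constant on level sets'', which is exactly what the ABM functional is tailored to deliver. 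Making this precise is delicate on two further counts: (i) the doubly connected topology produces boundary terms on \emph{both} $\Gamma_i$ and $\Gamma_o$ in every divergence/Pohozaev identity, which must be carefully accounted for and shown to have a favourable sign; and (ii) the foliation is a priori defined only off the critical set of $u$, so one must show that $F$ and its monotonicity survive across critical levels or --- preferably --- exclude interior critical points altogether, a point where the hypothesis $a<u<b$ should be decisive, since already for the radial models it forces the critical circle $\{|x|=\sqrt{C}\}$ to lie outside $\overline\Omega$. Once these are in place, the endpoint computation closes the proof.
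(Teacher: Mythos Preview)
Your proposal is a plan rather than a proof: the central object, the monotone functional $F$, is never written down (you give it only ``schematically'' with an unspecified weight $\Psi(u,|\nabla u|)$), its monotonicity is asserted but not derived, and the endpoint check --- the statement that ``these endpoint values are compatible with the monotone behaviour of $F$ only if $F$ is identically constant'' --- is precisely the heart of the matter and is left entirely to the reader. You also misstate the strength of the Hopf lemma on $\Gamma_o$: since $b-u$ is \emph{sub}harmonic and attains its \emph{minimum} there, Hopf does not apply, and indeed one only has $\beta\ge 0$; the possibility $\beta=0$ has to be treated separately. Finally, relying on the bare subharmonicity of $P=|\nabla u|^2+2u$ is unlikely to close the argument by itself, because $P|_{\Gamma_i}=\alpha^2+2a$ and $P|_{\Gamma_o}=\beta^2+2b$ are not a priori ordered (Proposition~\ref{pro:acceptable} compares $4a+\alpha^2$ with $4b+\beta^2$, not these), so the maximum principle for $P$ gives no immediate rigidity.

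For comparison, the paper's proof is concrete and structurally different from what you sketch. It first fixes a \emph{specific} model solution with the same boundary data (Lemma~\ref{le:comparison1}) and introduces the pseudo-radial function $\Psi$ via $u=\A-\tfrac12\Psi^2+\B\log\Psi$. The key analytic step is the \emph{pointwise} gradient estimate $|\nabla u|^2\le W_0:=\big((\B-\Psi^2)/\Psi\big)^2$ (Theorem~\ref{thm:gradient_estimate}), obtained not from a level-set monotonicity but from an elliptic inequality for a weighted difference $\gamma(W-W_0)$ and the Maximum Principle. This gradient bound is then converted into curvature-based area bounds $|\Gamma_i|\le 2\pi r_i$ and $|\Gamma_o|\ge 2\pi r_o$ (Proposition~\ref{pro:area_bound_Gamma}). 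A single divergence-theorem identity, integrating $\Delta u/(\B-\Psi^2)$ over $\Omega$, produces the opposite inequality $|\Gamma_i|/r_i\ge|\Gamma_o|/r_o$, forcing equality and hence $W\equiv W_0$; rigidity then comes from the equality case of Theorem~\ref{thm:gradient_estimate}. None of these three ingredients --- the choice of comparison model, the pointwise maximum-principle estimate, or the area bounds --- appears in your outline, and what you describe as ``the method of \cite{ABM}'' is in fact this pseudo-radial comparison, not a $P$-function level-set argument.
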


The strategy of the proof is based on the method developed in~\cite{ABM}. There, problem~\eqref{eq:problem} has been studied in the case $a=b=0$, using a new comparison strategy. 
Of course, under that hypothesis, monotonicity is lost and there are some complications, especially concerning the behaviour of the solution near the set of the maxima of the function $u$. However, we will show that
the core of the method employed in~\cite{ABM} is perfectly suited to study the case discussed in this paper. In fact, we can adapt most of the argument in~\cite{ABM} to prove Theorem~\ref{thm:main}. 

Perhaps surprisingly, our method proves to be less effective in the more studied case $b<a$. We will try to explain along the work why this is the case. Nevertheless, we will be able to employ our method to prove a slightly weaker version of Theorem~\ref{thm:reichel} (namely, we will need to assume the additional condition $2a+\alpha^2\leq 2b+\beta^2$, see Theorem~\ref{thm:main2}). As we will see, the proof is also less elementary than the one for Theorem~\ref{thm:main}.
It does seem then that the moving plane method is still the one best suited to characterize rotationally symmetric solutions that are radially decreasing.
Nevertheless, our alternative proof may be of interest: it is novel and it expands on our method in ways that may find further applications in the future.

Let us conclude with some final comments on the above results and on future applications. First of all, one natural question would be whether similar characterizations can be achieved for model solutions that are not monotonic along the radial coordinate. In this case, the situation is less clear as there are negative results: in~\cite{Kamburov_Sciaraffia} it is proven that there exist solutions to~\eqref{eq:problem} with $\alpha=\beta$ that are not rotationally symmetric. In the case $a=b=0$, there are both positive (\cite[Theorem~B and Theorem~C]{ABM}) and negative results (\cite[Theorem~A]{ABM}).
Another natural question concerns whether our method can be applied to more general elliptic equations (in the same way as the Weinberger's comparison method~\cite{Weinberger} was then developed in~\cite{Farina_Kawohl,
 Farina_Valdinoci,Garofalo_Lewis}) or to other problems in Riemannian geometry. In fact, the most part of our arguments (with the notable exception of the Pohozaev identity) do not rely on the structure of the Euclidean space at all. As a couple of examples of further applications, this method has proven to be quite successful to characterize static spacetimes in General Relativity~\cite{Borghini,BCM,Borghini_Mazzieri_I,
 Borghini_Mazzieri_II} and somewhat similar techniques have been employed for other problems in General Relativity~\cite{Agostiniani_Mazzieri_Oronzio,
 Fogagnolo_Pinamonti} and for $p$-harmonic functions in manifolds with nonnegative Ricci curvature in~\cite{Agostiniani_Fogagnolo_Mazzieri,
 Fogagnolo_Mazzieri_Pinamonti}.
 
The paper is structured as follows. In Section~\ref{sec:setup} we show that for any acceptable choice of $a,b,\alpha,\beta$ there is a rotationally symmetric solution solving problem~\eqref{eq:problem}. This is necessary in order to start our method, as we need to select a model solution to compare with. Section~\ref{sec:exploit} is the heart of the paper: a crucial gradient estimate (Theorem~\ref{thm:gradient_estimate}) is introduced and exploited to obtain some important area bounds for the boundary components (Proposition~\ref{pro:area_bound_Gamma}). In Subsection~\ref{sub:proof_main} these two results are then used, in combination with a divergence theorem argument, to prove Theorem~\ref{thm:main}. Unfortunately, the method described in Subsection~\ref{sub:proof_main} does not work to prove Theorem~\ref{thm:reichel}. In Section~\ref{sec:poho} we then develop a new alternative argument, based on a combination of the Pohozaev identity and the isoperimetric inequality, leading to the proof of Theorem~\ref{thm:main2}, which is a weaker version of Theorem~\ref{thm:reichel}. Finally, in Section~\ref{sec:comments} we further comment on the complications that one has to deal with when studying the case $a>b$, trying to analyze where they come from and how one may try to overcome them.  
Ultimately, the purpose of this final section is that of understanding what are the limits of our method and how far they can be pushed.

\section{Setting up the comparison argument}
\label{sec:setup}

Our first aim is that of understanding whether there are some relations that are always in place between the constants $a,\alpha,b,\beta$. The main tool that will help us in this regard is the well known {\em Pohozaev identity}~\cite{Pohozaev}. In our framework, this formula has the following form
\begin{align*}
\int_{\Omega}4\,u\,d\mu\,&=\,\frac{1}{2}\int_{\pa\Omega}(4\,u+|\D u|^2)\Braket{X|\nu}\,d\sigma
\\
&=\,\frac{1}{2}(4\,b+\beta^2)\int_{\Gamma_o}\Braket{X|\nu}\,d\sigma\,+\,\frac{1}{2}(4\,a+\alpha^2)\int_{\Gamma_i}\Braket{X|\nu}\,d\sigma\,,
\end{align*}
where $X=(x_1,x_2)$ is the position vector and $\nu$ is the exterior normal to $\pa\Omega$. Recalling that $\operatorname{div}X=2$ and using the divergence theorem, we deduce
\begin{equation}
\label{eq:poho}
\int_{\Omega}4\,u\,d\mu\,=\,(4\,b+\beta^2)\,|E_o|\,-\,(4\,a+\alpha^2)\,|E_i|\,.
\end{equation}

From this formula we deduce some relations between  $a,b,\alpha,\beta$, summarized in the following result.
\begin{proposition}
\label{pro:acceptable}
Let $(\Omega,u)$ be a solution to problem~\eqref{eq:problem}.
\begin{itemize}
\item If $a>b$ and $b<u<a$ in $\Omega$, then $4a+\alpha^2>4b+\beta^2$, $\alpha\geq 0$, $\beta<0$,
\item If $a<b$ and $a<u<b$ in $\Omega$, then $4a+\alpha^2>4b+\beta^2$, $\alpha<0$, $\beta\geq 0$.
\end{itemize} 
\end{proposition}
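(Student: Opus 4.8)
The plan is to extract everything from two integral identities: the integral of the equation $\Delta u=-2$ over $\Omega$, and the Pohozaev identity \eqref{eq:poho}. I will treat the second item in detail; the first follows along the same lines, with the roles of $(\Gamma_i,a,\alpha)$ and $(\Gamma_o,b,\beta)$ interchanged.

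Assume $a<b$ and $a<u<b$ in $\Omega$. Then $u$ attains its minimum over $\overline\Omega$ along $\Gamma_i$, where $u\equiv a$, and its maximum along $\Gamma_o$, where $u\equiv b$. Since $\Gamma_i$ and $\Gamma_o$ are smooth, the first-order condition in the inner normal direction at a boundary point immediately yields $\alpha=\partial u/\partial\nu\le 0$ on $\Gamma_i$ and $\beta=\partial u/\partial\nu\ge 0$ on $\Gamma_o$ (recall $\nu$ is the outer normal of $\Omega$, hence it points towards $E_i$ along $\Gamma_i$). To turn the first of these into a strict inequality, I integrate the equation on $\Omega$ and apply the divergence theorem:
\[
\alpha\,|\Gamma_i|+\beta\,|\Gamma_o|\;=\;\int_{\pa\Omega}\frac{\pa u}{\pa\nu}\,d\sigma\;=\;-2\,|\Omega|\;<\;0,
\]
with $|\Gamma_i|,|\Gamma_o|>0$ the lengths of the two curves. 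Since $\beta\ge 0$, this forces $\alpha<0$. (The inequality $\beta\ge0$ cannot be improved: the radial solutions constructed in Section~\ref{sec:setup} may have $\beta=0$.)

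It remains to prove $4a+\alpha^2>4b+\beta^2$, and this is where the Pohozaev identity enters. Since $u<b$ on the whole open set $\Omega$, we have the strict bound $\int_\Omega 4u\,d\mu<4b\,|\Omega|=4b\,(|E_o|-|E_i|)$; inserting it into \eqref{eq:poho} gives
\[
(4b+\beta^2)\,|E_o|-(4a+\alpha^2)\,|E_i|\;<\;4b\,|E_o|-4b\,|E_i|,
\]
that is, $\beta^2\,|E_o|<(4a+\alpha^2-4b)\,|E_i|$. As the left-hand side is $\ge0$ and $E_i\Subset E_o$ gives $0<|E_i|<|E_o|$, we first deduce $4a+\alpha^2-4b>0$, and then, using $|E_o|>|E_i|$ on the left-hand side, $\beta^2\le\beta^2\,|E_o|/|E_i|<4a+\alpha^2-4b$, which is the claim. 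For the first item one repeats the argument using instead the signs $\alpha\ge0$, $\beta\le0$ and the bound $\int_\Omega 4u\,d\mu<4a\,|\Omega|$.

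There is no serious obstacle here; the only point requiring care is the sign bookkeeping. In particular, one should notice that the divergence identity can promote to a strict inequality only the normal derivative on the component along which $u$ attains its \emph{minimum} — consistent with $u$ being superharmonic, so that Hopf's lemma would be available at minima but not at maxima — and that the Pohozaev identity must be paired with the one-sided bound $u<\max\{a,b\}$ rather than with $u>\min\{a,b\}$.
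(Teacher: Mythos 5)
Your proof is correct and uses the same central tool (the Pohozaev identity \eqref{eq:poho}) as the paper, so it is essentially the same approach, but there is one worthwhile micro-difference in how the strict sign $\alpha<0$ (resp.\ $\beta<0$) is obtained. The paper deduces it from the Hopf Lemma applied at the boundary component where the superharmonic function $u$ attains its minimum, whereas you integrate $\Delta u=-2$ over $\Omega$ and use the divergence identity $\alpha\,|\Gamma_i|+\beta\,|\Gamma_o|=-2|\Omega|<0$ to force the non-positive boundary slope to be strictly negative. Your route is a touch more elementary (no need for a barrier argument), and your closing remark about which endpoint Hopf is actually available at is exactly right. For the main inequality, the paper's rewriting
$\int_\Omega(4u-4b-\beta^2)\,d\mu=(4b+\beta^2-4a-\alpha^2)\,|E_i|$
gives $4a+\alpha^2>4b+\beta^2$ in one stroke, while your version passes through $\beta^2|E_o|<(4a+\alpha^2-4b)|E_i|$ and extracts the two pieces ($4a+\alpha^2-4b>0$, then $\beta^2<4a+\alpha^2-4b$) in sequence; this is a cosmetic difference (and note that your ``repeat for the first item'' direction needs a one-line case split when $4b+\beta^2-4a<0$, though it still goes through). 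No gaps.
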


\begin{proof}
We start by recalling that $\Omega=E_o\setminus\overline{E_i}$, so that in particular $|E_o|=|\Omega|+|E_i|$. As a consequence, we can rewrite~\eqref{eq:poho} in the following two forms
\begin{align*}
\int_{\Omega}\left(4\,u-4\,a-\alpha^2\right)\,d\mu\,&=\,(4\,b+\beta^2-4\,a-\alpha^2)\,|E_o|\,,
\\
\int_{\Omega}\left(4\,u-4\,b-\beta^2\right)\,d\mu\,&=\,(4\,b+\beta^2-4\,a-\alpha^2)\,|E_i|\,.
\end{align*}
If $b<u<a$ in $\Omega$, then the left hand side of the first equation is negative, whereas if $a<u<b$ in $\Omega$, then the left hand side of the second equation is negative. In both cases, we deduce $4a+\alpha^2>4b+\beta^2$.

The signs for $\alpha$ and $\beta$ follow immediately from the hypothesis that $a$ is the maximum (resp. minimum) of $u$ and $b$ is the minimum (resp. maximum) of $u$. The fact that $\beta\neq 0$ when $b<u<a$ and that $\alpha\neq 0$ when $a<u<b$ is a consequence of the Hopf Lemma (recall $\Delta u=-2<0$).
\end{proof}

In order to start our comparison argument, we first need to take a close look at the model solutions. 
In particular, we first have to make sure that we always have a model solution to compare with, for any acceptable value of $a,b,\alpha,\beta$.

 It can be easily checked that the rotationally symmetric solutions to $\De u=-2$ have the form
\begin{equation}
\label{eq:models}
u\,=\,\A-\frac{1}{2}|x|^2+\B\log |x|\,,
\end{equation}
with $\A,\B\in\R$. When $\B\leq 0$, these solutions are monotonically decreasing with respect to $|x|$. When $\B> 0$, these solutions are monotonically increasing for $|x|<\sqrt{\B}$ and they are monotonically decreasing for $|x|>\sqrt{\B}$. 

\begin{remark}
As in~\cite{ABM}, one can fix the value of $\A$ by means of a rescaling of the function and of the domain, ending up with the one-parameter family
$$
u\,=\,\frac{1-|x|^2}{2}+\B\log |x|\,.
$$
However, for our purposes in this paper this is not necessary, as it is actually easier to work directly with~\eqref{eq:models}.
\end{remark}

We expect that for any acceptable choice (according to Proposition~\ref{pro:acceptable}) of $a,b,\alpha,\beta$, there should be a solution of the form~\eqref{eq:models} solving our problem~\eqref{eq:problem}. This is true in the case $a<b$, as guaranteed by the following lemma:

\begin{lemma}
\label{le:comparison1}
If $a<b$, $\alpha<0$, $\beta\geq 0$, $4a+\alpha^2>4b+\beta^2$, then there exist constants $\A$ and $\B>0$ and two radii $0< r_i<r_o<\sqrt{\B}$ so that the function $u\,=\,\A-|x|^2/2+\B\log |x|$ satisfies 
problem~\eqref{eq:problem} in the annulus $\Omega=\{r_i<|x|<r_o\}$.
\end{lemma}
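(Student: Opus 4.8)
The plan is to treat this as a system of four equations in the four unknowns $\A,\B,r_i,r_o$ and solve it explicitly, exploiting the monotone structure of the model on the interval $(0,\sqrt{\B})$. For the model $u=\A-|x|^2/2+\B\log|x|$ one computes $\pa u/\pa|x|=-|x|+\B/|x|=(\B-|x|^2)/|x|$, so on a sphere of radius $r$ the exterior normal derivative (pointing out of the annulus, i.e. $-\pa/\pa|x|$ on $\Gamma_i$ and $+\pa/\pa|x|$ on $\Gamma_o$) equals $\alpha=(r_i^2-\B)/r_i$ and $\beta=(\B-r_o^2)/r_o$. Since we look for $r_i<r_o<\sqrt{\B}$, these automatically give $\alpha<0$ and $\beta>0$ (or $\beta=0$ when $r_o=\sqrt{\B}$), consistent with the hypotheses. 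The boundary Dirichlet conditions then read $a=\A-r_i^2/2+\B\log r_i$ and $b=\A-r_o^2/2+\B\log r_o$.

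Next I would eliminate $\A$ and recast everything in terms of the two radii. Introduce $t=r_i^2$, $s=r_o^2$ with $0<t<s<\B$. The Neumann conditions become $\B = t + \alpha\sqrt{t} = s - \beta\sqrt{s}$, which (for fixed $\B$) determine $\sqrt{t},\sqrt{s}$ as the roots in $(0,\sqrt{\B})$ of $x^2+\alpha x-\B=0$ and $x^2-\beta x - \B=0$ respectively; both have a unique positive root, and that root is automatically $<\sqrt\B$ because the quadratic is negative at $x=0$ has... more precisely at $x=\sqrt\B$ the first quadratic equals $\alpha\sqrt\B<0$ so its positive root exceeds $\sqrt\B$? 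One must be careful here: I would instead parametrize directly by $r_i$ and $r_o$, set $\B=r_i^2-\alpha r_i$ from the inner Neumann condition (note $\alpha<0$ makes $\B>r_i^2$, good), and then the outer Neumann condition $\B = r_o^2 - \beta r_o$ together with the two Dirichlet conditions gives, after subtracting, a single scalar equation
\[
b-a \;=\; \frac{t-s}{2} + \B\,\log\!\frac{s}{t}\,,
\]
where $s=r_o^2, t=r_i^2$ and $\B$, $s$ are both expressed through the relation $\B=r_o^2-\beta r_o$ and $\B=r_i^2-\alpha r_i$. So really there is one free parameter (say $r_i$, equivalently $\B$) and one remaining equation to solve; the condition $4a+\alpha^2>4b+\beta^2$ should be exactly what guarantees a solution exists in the right range.

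Concretely, I would fix $\B>0$ as the parameter, let $r_i(\B)\in(0,\sqrt\B)$ be the positive root of $r^2-\alpha r-\B=0$ and $r_o(\B)\in(0,\sqrt\B)$ be the positive root of $r^2+\beta r - \B=0$ (each quadratic is $-\B<0$ at $r=0$ and, at $r=\sqrt\B$, equals $-\alpha\sqrt\B>0$ resp. $\beta\sqrt\B\ge 0$, so the root does lie in $(0,\sqrt\B)$, and since $\alpha<0$, $r_i<r_o$ precisely when $-\alpha<\beta$ — hmm, this ordering needs checking and is where the inequality $4a+\alpha^2>4b+\beta^2$ will come back in), and then define
\[
F(\B)\;=\;\frac{r_i(\B)^2-r_o(\B)^2}{2}+\B\log\frac{r_o(\B)}{r_i(\B)}\;-\;(b-a).
\]
Choosing $\A$ by $\A=a+r_i^2/2-\B\log r_i$ makes both Dirichlet conditions hold iff $F(\B)=0$. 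I would then show $F$ is continuous in $\B$ on the relevant range, analyze its limits as $\B\to 0^+$ and $\B\to\infty$ (or to the threshold where $r_i=r_o$), and apply the intermediate value theorem; the hypothesis $4a+\alpha^2>4b+\beta^2$ should translate into the sign information on these limits that forces a zero, and also into $r_i<r_o$ at that zero. The main obstacle is precisely this last bookkeeping: verifying that the admissibility inequality is equivalent to (or at least implies) the existence of a root $\B$ for which $0<r_i(\B)<r_o(\B)<\sqrt\B$ simultaneously, since one must control the monotonicity or at least the boundary behaviour of $F$ and of $r_o-r_i$ as functions of $\B$, and the algebra relating $4a+\alpha^2$, $4b+\beta^2$ to these quantities is not entirely transparent — I expect one needs the Pohozaev-type identity \eqref{eq:poho} (or its rewritten forms in the proof of Proposition~\ref{pro:acceptable}), specialized to the model solution, to see the equivalence cleanly.
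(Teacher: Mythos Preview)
Your approach is exactly the paper's: parametrise by $\B>0$, define $r_i(\B),r_o(\B)$ as the positive roots of $r^2-\alpha r-\B=0$ and $r^2+\beta r-\B=0$ (both lying in $(0,\sqrt\B\,]$, as you correctly check on your second pass), reduce to a single scalar equation $F(\B)=0$, and invoke the intermediate value theorem. The paper simply carries out the limit computation you leave pending: writing $r_i,r_o$ via the quadratic formula and expanding, one finds
\[
\lim_{\B\to 0^+}F(\B)\ \propto\ 4a-4b<0,\qquad
\lim_{\B\to+\infty}F(\B)\ \propto\ 4a+\alpha^2-4b-\beta^2>0,
\]
so a zero exists. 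No Pohozaev-type input is needed; the algebra is direct.

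Your worry about the ordering $r_i<r_o$ is a red herring. Once $\B$ satisfies $F(\B)=0$ and $\A$ is fixed by the inner Dirichlet condition, the model $u=\A-|x|^2/2+\B\log|x|$ is strictly increasing on $(0,\sqrt\B)$; since it takes the value $a$ at $r_i$ and $b>a$ at $r_o$, the inequality $r_i<r_o$ follows automatically. (Also note the sign slip in your first formulation, $\B=t+\alpha\sqrt t$: it should read $\B=t-\alpha\sqrt t$, which is what your later quadratic $r^2-\alpha r-\B=0$ correctly encodes.)
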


\begin{proof}
The proof of the lemma is more of a long exercise. We start by imposing the conditions we want on $\A,\B,r_o,r_i$, namely

\begin{equation}
\label{eq:system}
\begin{dcases}
\A-r_i^2/2+\B\log r_i\,=\,a
\\
\A-r_o^2/2+\B\log r_o\,=\,b
\\
\B-r_i^2\,=\,-\alpha\, r_i
\\
\B-r_o^2\,=\,\beta\, r_o
\end{dcases}
\end{equation}
and we employ the last two equations to obtain 
\begin{equation}
\label{eq:ro_ri}
r_i\,=\,\frac{\alpha\pm\sqrt{\alpha^2+4\B}}{2}\,,\qquad r_o\,=\,\frac{-\beta\pm\sqrt{\beta^2+4\B}}{2}\,.
\end{equation}
Since we are assuming $\alpha< 0$ and $\beta\geq 0$, in order for $r_o,r_i$ to be positive, we need to choose the plus sign in~\eqref{eq:ro_ri} and we need $\B>0$.

Substituting in the first two equations, we then deduce that we have a solution if and only if we can find a zero for the function
$$
F(\B)\,=\,4a+\alpha^2-4b-\beta^2+\alpha\sqrt{\alpha^2+4\B}+\beta\sqrt{\beta^2+4\B}+4\B\log\left(\frac{-\beta+\sqrt{\beta^2+4\B}}{\alpha+\sqrt{\alpha^2+4\B}}\right)\,.
$$
Taking the limit at zero and infinity, we compute
\begin{align*}
\lim_{\B\to 0}F(\B)\,&=\,4a-4b<0\,,
\\
\lim_{\B\to+\infty}F(\B)\,&=\,4a+\alpha^2-4b-\beta^2>0\,.
\end{align*}
It follows by continuity of $F$ that there is a value $0<\B<\infty$ such that $F(\B)=0$, as wished.
\end{proof}

The case $a>b$ appears to be more complicated. In the next lemma we will show that we can still find a model to compare with as long as we assume the additional condition $2a+\alpha^2\leq 2b+\beta^2$.
This is the first instance in which we notice complications in the case $a>b$. More specifically, it seems that negative values of $\B$ are harder to deal with. In this respect, the hypothesis $2a+\alpha^2\leq 2b+\beta^2$ is helpful as it forces $\B\geq 0$.

\begin{lemma}
\label{le:comparison2}
Let $a>b$, $\alpha\geq 0$, $\beta<0$, $4a+\alpha^2>4b+\beta^2$. If we further have $2a+\alpha^2\leq 2b+\beta^2$, then there exist constants $\A$ and $\B\geq 0$ and two radii $\sqrt{M}\leq r_i<r_o$ so that the function $u\,=\,\A-|x|^2/2+\B\log |x|$ satisfies 
problem~\eqref{eq:problem} in the annulus $\Omega=\{r_i<|x|<r_o\}$. 
\end{lemma}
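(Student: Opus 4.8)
The plan is to mimic closely the proof of Lemma~\ref{le:comparison1}, the only differences being the signs of $\alpha,\beta$ (which dictate the sign choice in the analogue of~\eqref{eq:ro_ri}) and the fact that here one must allow the limiting value $\B=0$. I would impose on $\A,\B,r_o,r_i$ the system~\eqref{eq:system} with the given constants $a,b,\alpha,\beta$, and use its last two equations to write
\begin{equation*}
r_i\,=\,\frac{\alpha+\sqrt{\alpha^2+4\B}}{2}\,,\qquad r_o\,=\,\frac{-\beta+\sqrt{\beta^2+4\B}}{2}\,,
\end{equation*}
where the plus sign is selected so that $r_i,r_o\geq 0$, this being the right choice for every $\B\geq 0$ since $\alpha\geq 0$ and $\beta<0$. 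Squaring the last two equations of~\eqref{eq:system} gives $r_i^2=\B+\alpha r_i\geq\B$ and $r_o^2=\B-\beta r_o>\B$, so that $\sqrt{\B}\leq r_i$ and $\sqrt{\B}<r_o$ hold automatically. Substituting $r_i,r_o$ into the first equation of~\eqref{eq:system} fixes $\A$, and subtracting the first two equations of~\eqref{eq:system} (which eliminates $\A$) leads, after the same manipulations as in Lemma~\ref{le:comparison1}, to the requirement that the very same function
\begin{equation*}
F(\B)\,=\,4a+\alpha^2-4b-\beta^2+\alpha\sqrt{\alpha^2+4\B}+\beta\sqrt{\beta^2+4\B}+4\B\log\!\left(\frac{-\beta+\sqrt{\beta^2+4\B}}{\alpha+\sqrt{\alpha^2+4\B}}\right)
\end{equation*}
have a zero, this time in the range $\B\geq 0$.

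Next I would analyze $F$ at the two endpoints. As $\B\to 0^{+}$ the logarithmic term vanishes (also when $\alpha=0$, since $\B\log\B\to 0$), and using $\sqrt{\alpha^2}=\alpha$, $\sqrt{\beta^2}=-\beta$ one gets $\lim_{\B\to 0^{+}}F(\B)=2\big((2a+\alpha^2)-(2b+\beta^2)\big)\leq 0$ — this is exactly where the supplementary hypothesis $2a+\alpha^2\leq 2b+\beta^2$ enters. As $\B\to+\infty$, expanding $\sqrt{\alpha^2+4\B}=2\sqrt{\B}+O(\B^{-1/2})$ (and similarly for the other root and for the logarithm), the leading $\sqrt{\B}$–contributions of $\alpha\sqrt{\alpha^2+4\B}+\beta\sqrt{\beta^2+4\B}$ and of $4\B\log(\cdots)$ cancel, leaving $\lim_{\B\to+\infty}F(\B)=4a+\alpha^2-4b-\beta^2>0$ by hypothesis. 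By continuity $F$ has a zero $\B\geq 0$; it lies in $(0,+\infty)$ when the inequality $2a+\alpha^2\leq 2b+\beta^2$ is strict, and one may take $\B=0$ in the borderline case of equality.

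Finally I would check $\sqrt{\B}\leq r_i<r_o$ for this $\B$. The bound $\sqrt{\B}\leq r_i$ is already in hand. For $r_i<r_o$, observe that the hypotheses force $\beta^2-\alpha^2\geq 2(a-b)>0$, hence (as $\alpha\geq 0>\beta$) $-\beta>\alpha$, i.e. $\alpha+\beta<0$; consequently
\begin{equation*}
r_o-r_i\,=\,\tfrac{1}{2}\!\left(\sqrt{\beta^2+4\B}-\sqrt{\alpha^2+4\B}\right)-\tfrac{\alpha+\beta}{2}\,>\,0\qquad\text{for every }\B\geq 0,
\end{equation*}
both summands being strictly positive. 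When $\B>0$ this also gives $r_i\geq\sqrt{\B}>0$, and when $\B=0$ one has $r_i=\alpha$, $r_o=-\beta$ with $\alpha<-\beta$. With $\A,\B,r_i,r_o$ so determined, all four equations of~\eqref{eq:system} hold, which is precisely the assertion that $u=\A-|x|^{2}/2+\B\log|x|$ solves~\eqref{eq:problem} on the annulus $\{r_i<|x|<r_o\}$.

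As the author himself anticipates, this is "more of a long exercise": the computations are entirely of the type appearing in Lemma~\ref{le:comparison1}, with no single hard step. The most delicate individual point is the cancellation of the leading $\sqrt{\B}$ terms in the expansion of $F$ at $+\infty$, which is what pins its limit down to $4a+\alpha^{2}-4b-\beta^{2}$. The one genuinely new feature compared to Lemma~\ref{le:comparison1} is that here $\lim_{\B\to 0^{+}}F$ is not negative a priori, and it is exactly the extra assumption $2a+\alpha^{2}\leq 2b+\beta^{2}$ that makes it nonpositive, so that the intermediate value argument still runs — which at the same time clarifies why that assumption is imposed.
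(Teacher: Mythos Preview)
Your proof is correct and follows essentially the same route as the paper's: set up the same system, pick the plus sign in the formulas for $r_i,r_o$, reduce to finding a zero of the same function $F(\B)$, and use $F(0)=4a+2\alpha^2-4b-2\beta^2\leq 0$ together with $\lim_{\B\to+\infty}F(\B)=4a+\alpha^2-4b-\beta^2>0$. You actually supply more detail than the paper does, explicitly verifying $\sqrt{\B}\leq r_i$ and $r_i<r_o$ (the latter via $\beta^2-\alpha^2\geq 2(a-b)>0$), points the paper leaves implicit.
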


\begin{proof}
Proceeding as in the proof of Lemma~\ref{le:comparison1}, we find again the expressions~\eqref{eq:ro_ri} for $r_o$ and $r_i$. Notice that, in contrast with the previous case, both signs for $\pm$ in~\eqref{eq:ro_ri} are possible. However, since we are ultimately interested in having $\B\geq 0$, this again essentially forces us to choose  the plus sign in the formul\ae~\eqref{eq:ro_ri} for $r_i$ and $r_o$.
Therefore, as in the previous lemma, we reduce ourselves to finding a zero for the function
$$
F(\B)\,=\,4a+\alpha^2-4b-\beta^2+\alpha\sqrt{\alpha^2+4\B}+\beta\sqrt{\beta^2+4\B}+4\B\log\left(\frac{-\beta+\sqrt{\beta^2+4\B}}{\alpha+\sqrt{\alpha^2+4\B}}\right)\,.
$$
Then we compute
\begin{align*}
F(0)\,&=\,4a+2\alpha^2-4b-2\beta^2\,,
\\
\lim_{\B\to+\infty}F(\B)\,&=\,4a+\alpha^2-4b-\beta^2>0\,.
\end{align*}
Therefore, since we are assuming $4a+2\alpha^2-4b-2\beta^2\leq 0$, we conclude again that there is a value $0\leq\B<\infty$ such that $F(\B)=0$.
\end{proof}

\section{Exploiting the comparison argument}
\label{sec:exploit}

In the previous section we have developed a way to compare a general solution of~\eqref{eq:problem} with a model solution~\eqref{eq:models}. Building on that, in this section we develop our comparison technique. In Subsection~\ref{sub:grad_est} we prove a crucial gradient estimate, that is then exploited in Subsection~\ref{sub:area_bounds} to prove area bounds for the boundary components $\Gamma_i$ and $\Gamma_o$. The results in both these subsection work in both the case $a<b$ and $b<a$.  In Subsection~\ref{sub:proof_main} we will then specialize to the case $a<b$ and we will prove Theorem~\ref{thm:main}.

\subsection{Gradient estimate}
\label{sub:grad_est}

Let $u$ be a solution to problem~\eqref{eq:problem} for some fixed values of $a,\alpha,b,\beta$. In the case $a>b$, assume further that $2a+\alpha^2\leq 2b+\beta^2$. Then, Lemmata~\ref{le:comparison1} and~\ref{le:comparison2} tell us that there exist constants $\A\in\R$, $\B\geq 0$, $r_o>0$, $r_i>0$ such that the model solution
$$
u\,=\,\A-\frac{1}{2}|x|^2+\B\log|x|\,,
$$
in the annulus $\{r_i<|x|<r_o\}$, solves problem~\eqref{eq:problem} for the same values of  $a,\alpha,b,\beta$. 

From now on, the constants $\A\in\R$, $\B\geq 0$, $r_o>0$, $r_i>0$ will always be the ones prescribed by Lemmata~\ref{le:comparison1} and~\ref{le:comparison2}.
We are now ready to set up our comparison argument, in the spirit of~\cite{ABM}. 
We start by defining the {\em pseudo-radial function} $\Psi:\Omega\to \R$ implicitly via the following identity
\begin{equation}
\label{eq:pr}
u\,=\,\A-\frac{1}{2}\Psi^2+\B\log\Psi\,.
\end{equation} 
From the Implicit Function Theorem, it is easily seen that $\Psi$ is well defined as long as $\Psi^2\neq\B$ inside $\Omega$. From now on we will always assume that $a<u<b$ or $b<u<a$, and from this it easily follows that the function $\Psi$ actually takes values between $r_i$ (attained at $\Gamma_i$) and $r_o$ (attained at $\Gamma_o$). On the other hand, Lemmata~\ref{le:comparison1} and~\ref{le:comparison2} tell us that either $r_i<r_o\leq \sqrt{\B}$ or $\sqrt{\B}\leq r_i<r_o$. Therefore, for all our purposes, the function $\Psi:\Omega\to[r_i,r_o]$ will always be well defined.


We also introduce the two functions 
\begin{equation}
\label{eq:W0}
W\,=\,|\D u|^2\quad \hbox{and}\quad W_0\,=\,\left(\frac{\B-\Psi^2}{\Psi}\right)^2\,.
\end{equation}
The function $W_0$ corresponds to the value that $|\D u|^2$ would have on the model solution. In fact, it is easily checked that, for the model solutions~\eqref{eq:models}, it holds $W_0\equiv W$ pointwise. Our strategy is then that of comparing $W$ and $W_0$ in $\Omega$, ultimately trying to prove that they have to coincide.

First of all, notice that, by construction, $W$ and $W_0$ coincide on the boundary of $\Omega$, namely when $\Psi=r_o$ and when $\Psi=r_i$. We will now show that, under our hypotheses, $W$ is actually controlled by $W_0$ on the whole $\Omega$.

\begin{theorem}[Gradient Estimates]
\label{thm:gradient_estimate}
Let $(\Omega,u)$ be a solution to problem~\eqref{eq:problem} with respect to some choice of $a,\alpha,b,\beta$. Suppose that $a<u<b$ or $b<u<a$. In the case $b<u<a$, assume further that $2a+\alpha^2\leq 2b+\beta^2$.
Let $W$, $W_0$ be defined as above. Then it holds 
\begin{equation}
\label{eq:gradient_estimate}
W \, \leq \,  W_0 \quad\hbox{in }\Omega\,,
\end{equation}
Moreover, if $W=W_0$ at some point in the interior of $\Omega$, then $(\Omega,u)$ is rotationally symmetric.
\end{theorem}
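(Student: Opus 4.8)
The plan is to recast the estimate as a one–sided Lipschitz bound for the pseudo--radial function and then run a maximum principle argument on $|\nabla\Psi|^2$, in the spirit of~\cite{ABM}. Differentiating the defining relation~\eqref{eq:pr} once gives $\nabla u=\big((\B-\Psi^2)/\Psi\big)\nabla\Psi$, hence $W=W_0\,|\nabla\Psi|^2$; setting $P:=|\nabla\Psi|^2=W/W_0$, the inequality $W\le W_0$ is equivalent to $P\le 1$, since $W_0=\big((\B-\Psi^2)/\Psi\big)^2>0$ in the interior of $\Omega$ (Lemmata~\ref{le:comparison1} and~\ref{le:comparison2} give either $r_i<r_o\le\sqrt{\B}$ or $\sqrt{\B}\le r_i<r_o$, so $\Psi^2\ne\B$ there), while $P\equiv 1$ on $\partial\Omega$ and $\Psi$ is smooth with critical set equal to that of $u$, where $P$ vanishes. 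Applying $\Delta$ to~\eqref{eq:pr} and using $\Delta u=-2$ produces a quasilinear equation $\Delta\Psi=h(\Psi,|\nabla\Psi|^2)$ with $h(s,t)=\big(-2s^2+(\B+s^2)t\big)/\big(s(\B-s^2)\big)$ (note $h(s,1)=1/s$). Then I would combine the Bochner formula $\tfrac12\Delta P=|\nabla^2\Psi|^2+\langle\nabla\Psi,\nabla\Delta\Psi\rangle$ (the Ricci term vanishes in $\R^2$) with the two–dimensional fact that, away from the critical set, $\nabla^2\Psi$ is completely determined by $\Delta\Psi$, $\nabla\Psi$ and $\nabla P$, to obtain at non–critical points a pointwise identity $\Delta P=|\nabla P|^2/P+\langle\mathbf b,\nabla P\rangle+2\,h(\Psi,P)^2+2\,h_s(\Psi,P)\,P$, whose zeroth–order term simplifies to
\[
2\,h(\Psi,P)^2+2\,h_s(\Psi,P)\,P\;=\;\frac{4\,q(P)}{(\B-\Psi^2)^2}\,,\qquad q(t)=(t-1)\big[(3\B+\Psi^2)\,t-2\Psi^2\big]\,.
\]

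With this identity in hand I would argue by contradiction. If $\max_{\overline{\Omega}}P>1$, the maximum is not attained on $\partial\Omega$, hence it is attained at an interior point $p$, necessarily non–critical since $P(p)>0$; there $\nabla P(p)=0$ and $\Delta P(p)\le 0$, while the identity above forces $\Delta P(p)=4\,q(P(p))/(\B-\Psi(p)^2)^2$. But $q(t)>0$ for every $t>1$: indeed $t-1>0$ and $(3\B+\Psi^2)t-2\Psi^2\ge 3\B-\Psi^2>0$, using $\Psi^2<\B$ in the case $a<u<b$ (Lemma~\ref{le:comparison1}). This contradiction gives $P\le 1$, i.e. $W\le W_0$. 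In the case $b<u<a$ the same scheme applies, and the extra hypothesis $2a+\alpha^2\le 2b+\beta^2$ (which forces $\B\ge 0$, by Lemma~\ref{le:comparison2}) is used precisely to keep $q(t)$ positive for $t>1$; this is the point at which that case becomes genuinely more delicate.

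For the rigidity statement, suppose $W=W_0$ at an interior point $p$; then $P(p)=1=\max_{\overline{\Omega}}P$, so $P-1\le 0$ attains an interior maximum equal to $0$ at $p$. Near $p$ one has $q(P)=(P-1)\big[(3\B+\Psi^2)P-2\Psi^2\big]$ with positive bracket, so $P-1$ solves there a linear elliptic equation with non–positive zeroth–order coefficient; the strong maximum principle gives $P\equiv 1$ near $p$, and a standard connectedness argument propagates this to $P\equiv 1$ on all of $\Omega$. Then $|\nabla\Psi|\equiv 1$ and $\Delta\Psi=h(\Psi,1)=1/\Psi$, so the level sets of $\Psi$ — which coincide with those of $u$ — are mutually equidistant curves of constant geodesic curvature, that is, concentric circles; consequently $\Omega$ is an annulus, $\Psi=|x-x_0|$ for some $x_0$, and $u$ is rotationally symmetric.

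I expect the main obstacle to be the second step, namely carrying out the two–dimensional Hessian identity and, above all, verifying that the zeroth–order term equals $4q(P)/(\B-\Psi^2)^2$ with $q(t)>0$ for $t>1$: this positivity is the linchpin of the whole argument, and it is exactly here that the asymmetry between the two cases surfaces — it comes for free when $a<u<b$, whereas when $b<u<a$ one must invoke $2a+\alpha^2\le 2b+\beta^2$ (and handle separately the degeneracy near $\Gamma_i$ when $\alpha=0$, where $W_0$ may vanish on the boundary).
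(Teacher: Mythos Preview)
Your scheme is essentially the paper's, recast in terms of $P=|\nabla\Psi|^2=W/W_0$ rather than the weighted quantity $F_\gamma=\gamma(W-W_0)$. The derivation of the identity
\[
\Delta P=\frac{|\nabla P|^2}{P}+\langle\mathbf b,\nabla P\rangle+\frac{4\,q(P)}{(\B-\Psi^2)^2},\qquad q(t)=(t-1)\big[(3\B+\Psi^2)t-2\Psi^2\big],
\]
is correct, and in the case $a<u<b$ (where $\Psi^2<\B$, hence $3\B-\Psi^2>0$) your interior--maximum argument goes through cleanly; this is a legitimate and slightly more direct variant of the paper's proof for that regime.

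The gap is in the case $b<u<a$. You claim that the extra hypothesis $2a+\alpha^2\le 2b+\beta^2$ --- equivalently $\B\ge 0$ --- ``keeps $q(t)$ positive for $t>1$''. This is false. The second factor of $q$ changes sign at $t_0=2\Psi^2/(3\B+\Psi^2)$, and $t_0>1$ whenever $\Psi^2>3\B$; since here $\Psi\ge r_i\ge\sqrt{\B}$ with no upper control on $\Psi^2/\B$, this certainly occurs (already for $\B=0$ one has $q(t)=\Psi^2(t-1)(t-2)<0$ on $(1,2)$). So at an interior maximum with $1<P(p)<t_0$ your identity gives $\Delta P(p)<0$, and there is no contradiction. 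The plain function $P$ simply does not satisfy a maximum principle in this regime.

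This is exactly why the paper introduces the weight $\gamma=\Psi^2/|\B-\Psi^2|$: with $F_\gamma=\gamma(W-W_0)=|\B-\Psi^2|(P-1)$ the elliptic inequality becomes
\[
\Delta F_\gamma\,-\,\frac{8\B\Psi^2}{(\B-\Psi^2)^4}\,|\nabla u|^2\,F_\gamma\,\ge\,0,
\]
whose zeroth--order coefficient is nonnegative precisely when $\B\ge 0$, independently of the size of $\Psi^2/\B$. The weight is not cosmetic --- it is what makes the $b<u<a$ case go through. If you want to keep working with $P$, you must multiply by such a factor (equivalently, run the maximum principle on $(\Psi^2-\B)(P-1)$ rather than on $P-1$); otherwise the argument cannot close.

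A secondary point: in both regimes there is a boundary degeneracy (at $\Gamma_o$ when $\beta=0$ for $a<u<b$, at $\Gamma_i$ when $\alpha=0$ for $b<u<a$) where $W=W_0=0$ and $P$ is \emph{a priori} undefined. You mention this in passing; the paper resolves it via a reverse \L ojasiewicz estimate showing $F_\gamma\to 0$ at that boundary, and your write--up would need the analogous statement that $P$ extends continuously with value $1$ there.
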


\begin{remark}
This theorem is clearly inspired by~\cite[Theorem~3.1]{ABM}, however we mention that similar gradient estimates have already found applications in a number of problems. One of the most notable ones is the paper~\cite{Beig_Simon} and the subsequent developments in~\cite{Chrusciel_Simon,Lee_Neves}, where a gradient comparison argument very much resembling Theorem~\ref{thm:gradient_estimate} has been obtained and exploited for static spacetimes in General Relativity. 
The introduction of the pseudo-radial function is instead more recent (it was exploited in the series of papers~\cite{BCM,Borghini_Mazzieri_I,
Borghini_Mazzieri_II} for static spacetimes). This function is really helpful as it allows to have an explicit formula for $W_0$. This is crucial in the computations that follows: we will see that $\Psi$ will appear in all the important computations in this section.
\end{remark}

\begin{proof}
The proof is essentially the same as the one in~\cite{ABM}, but let us give some comments. Let $\A\in\R$, $\B\geq 0$, $r_o>0$, $r_i>0$ be the constants prescribed by Lemmata~\ref{le:comparison1} and~\ref{le:comparison2}.
The crucial estimate is deduced starting from the following quantity
$$
\DD u\,+\,\frac{2\B}{(\B -\Psi^2)^2}du\otimes du\,+\left[1\,-\,\frac{\B}{(\B -\Psi^2)^2}|\D u|^2\right]g_{\R^2}\,.
$$
The heuristic behind the choice of the quantity above is essentially the fact that it can be computed to be zero on the model solution. 
Computing explicitly its square norm and using the fact that it is nonnegative, we obtain the following estimate for the hessian of $u$:
\begin{equation*}
|\nana u|^2\,\geq\,-\,\frac{2\,\B}{(\B-\Psi^2)^2}\langle\na(W-W_0)|\na u\rangle\,
\\
+\,2\,\left[1\,+\,\frac{2\,\B^2}{\Psi^2(\B-\Psi^2)^2}|\D u|^2\,-\,\frac{\B^2}{(\B-\Psi^2)^4}|\na u|^4\right]\,.
\end{equation*}
Starting now from the Bochner formula and plugging in this estimate, we obtain an elliptic inequality for the quantity $W-W_0$
\begin{multline*}
\De(W-W_0)\,\geq\,-\,\frac{4\,\B}{(\B-\Psi^2)^2}\langle\na(W-W_0)|\na u\rangle\,
\\
+\,\frac{4\,\B}{(\B-\Psi^2)^2}\,\left[1\,-\,\frac{\B}{(\B-\Psi^2)^2}|\D u|^2\right](W-W_0)\,,
\end{multline*}
and considering the function $F_\gamma=\gamma\,(W-W_0)$, where $\gamma=\gamma(\Psi)>0$, one gets
\begin{multline}
\label{eq:elliptic_ineq_F}
\De F_\gamma\,\geq\,-\,\frac{2\Psi}{\B-\Psi^2}\left[\frac{\gamma'}{\gamma}\,-\,\frac{2\,\B}{\Psi(\B-\Psi^2)}\right]\langle\na F_\gamma|\na u\rangle\,-\,\frac{2\,\Psi}{\B-\Psi^2}\,\left[\frac{\gamma'}{\gamma}\,-\,\frac{2\,\B}{\Psi(\B-\Psi^2)}\right]\,F_\gamma
\\
+\,\frac{W}{W_0}\,\left[\left(\frac{\gamma'}{\gamma}\right)'\,-\,\left(\frac{\gamma'}{\gamma}\right)^2\,+\,\frac{\Psi^2+5\B}{\Psi(\B-\Psi^2)}\frac{\gamma'}{\gamma}\,-\,\frac{4\,\B^2}{\Psi^2(\B-\Psi^2)^2}\right]F_\gamma\,,
\end{multline}
where we have  used $'$ to denote the differentiation with respect to $\Psi$.
We now need to find a function $\gamma$ such that the coefficients  of the zero order terms have the right sign.
A good choice is to set
$$
\frac{\gamma'}{\gamma}\,=\,\frac{2\B}{\Psi(\B-\Psi^2)}\,,
$$
which corresponds to choosing
$$
\gamma\,=\,\frac{\Psi}{\sqrt{W_0}}\,=\,\frac{\Psi^2}{|\B-\Psi^2|}\,.
$$
We know that $\Psi^2\neq \B$ inside $\Omega$, hence $\gamma$ is well defined in the interior of $\Omega$. However, $\Psi^2$ may attain the value $\B$ on the boundary $\pa\Omega$: this happens when $\alpha=0$ (which implies from~\eqref{eq:ro_ri} that $r_i=\sqrt{\B}$) or when $\beta=0$ (which implies from~\eqref{eq:ro_ri} that $r_o=\sqrt{\B}$). Let us for the moment assume that this does not happen, so that $F_\gamma$ is well defined on the whole $\Omega$.
With this choice of $\gamma$, it is easily seen that $F_\gamma$ satisfies
\begin{equation}
\label{eq:elliptic_ineq_F_simplified}
\De F_\gamma\,-\,\frac{8\,\B \, \Psi^2}{(\B-\Psi^2)^4}\,|\D u|^2\,F_\gamma\,\geq\,0\,.
\end{equation}
Since $\B\geq 0$, it follows that $F_\gamma$ satisfies the Maximum Principle in $\Omega$. Furthermore, $W = W_0$ on $\pa\Omega$ by construction, hence the Maximum Principle implies that $F_\gamma\leq 0$ (equivalently,
$W\leq W_0$) on the whole $\Omega$. Furthermore, if the equality $W=W_0$ holds at one point $p$ in the interior of $N$, then, applying the Strong Maximum Principle in a neighborhood of $p$, we deduce that $W=W_0$ on the whole $\Omega$. It is then easy to conclude the desired rigidity statement with standard arguments, see for instance~\cite[Theorem~4.2]{Borghini}. 

It remains to discuss the case where $\Psi^2=\B$ on one of the boundary components, say $\Gamma_i$ (for $\Gamma_o$, the same argument apply). In this case, we consider $\ep>0$ and a small neighborhood $U_\ep=\{x\in\Omega\,:\,d(x,\Gamma_i)<\ep\}$ of $\Gamma_i$, and we  work on the domain $\Omega_\ep=\Omega-U_\ep$. Clearly $F_\gamma$ is well defined and satisfies the Maximum Principle in $\Omega_\ep$. We will then obtain the desired result by taking $\ep\to 0$, provided we can prove that $F_\gamma\to 0$ as we approach $\Gamma_i$. It is easily seen that $F_\gamma$ goes to zero if and only if $W/\sqrt{W_0}$ goes to zero, which in turn is equivalent to $|\D u|^2/\sqrt{a-u}\to 0$, see~\cite[Lemma~A.1]{ABM}.
The latter is granted by the Reverse {\L}ojasiewicz Inequality~\cite[Theorem~2.2]{BCM} (to be more precise,~\cite[Theorem~2.2]{BCM} cannot be applied directly, as it is written for interior points only; however the proof extends without modifications to the case at hand).
\end{proof}

\subsection{Area bounds}
\label{sub:area_bounds}

Following again~\cite{ABM}, we can now exploit the gradient estimate proven above to obtain area bounds for both $\Gamma_i$ and $\Gamma_o$.

\begin{proposition}
\label{pro:area_bound_Gamma}
Let $(\Omega,u)$ be a solution to problem~\eqref{eq:problem} with respect to some choice of $a,\alpha,b,\beta$. Suppose that $a<u<b$ or $b<u<a$. In the case $b<u<a$, assume further that $2a+\alpha^2\leq 2b+\beta^2$. Then
\begin{equation}
|\Gamma_i|\,\leq\, 2\pi\,r_i\quad\hbox{ and }\quad
|\Gamma_o|\,\geq\, 2\pi\, r_o\,.
\end{equation}
\end{proposition}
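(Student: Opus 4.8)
The plan is to integrate the gradient estimate from Theorem~\ref{thm:gradient_estimate} against an appropriate boundary integrand and combine it with the geodesic curvature information coming from the Neumann data. First I would recall that on each boundary component the solution $u$ is constant, hence $\na u = \pm|\na u|\,\nu$ there; in particular $|\na u| = |\alpha|$ on $\Gamma_i$ and $|\na u| = |\beta|$ on $\Gamma_o$, and by construction of $\Psi$ (which attains $r_i$ on $\Gamma_i$ and $r_o$ on $\Gamma_o$) these values equal $\sqrt{W_0}$ evaluated at $\Psi = r_i$ and $\Psi = r_o$ respectively, namely $|\B - r_i^2|/r_i$ and $|\B - r_o^2|/r_o$. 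So the point values of $|\na u|$ on $\pa\Omega$ already match the model, consistent with the remark that $W = W_0$ on $\pa\Omega$. The content of the proposition is therefore about the \emph{lengths} of the boundary curves, not the gradient, and the bridge between the two must be a divergence-theorem identity.

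The key step I expect is to apply the divergence theorem to a vector field built from $u$ on a suitable level-set region, or equivalently to differentiate the area/volume of the superlevel (resp. sublevel) sets. Concretely, for $a < u < b$ one has $\Psi$ increasing with $u$ and $r_i < r_o \le \sqrt{\B}$, so on $\Gamma_i$ the outer normal $\nu$ points in the direction of $-\na u$ while on $\Gamma_o$ it points along $+\na u$. Integrating $\Delta u = -2$ over $\Omega$ gives $-2|\Omega| = \int_{\pa\Omega}\pa_\nu u\,d\sigma = \beta|\Gamma_o| - |\alpha|\,|\Gamma_i|$ (using $\alpha<0$), which is one linear relation but not enough by itself. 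The extra ingredient is the second Pohozaev-type identity, or more precisely an identity obtained by integrating $\operatorname{div}(W\,\na u)$ type quantities; here is where the gradient estimate $W \le W_0$ enters, because it lets us estimate $\int_{\pa\Omega} \pa_\nu W\,d\sigma$ or a weighted version against the model value. Comparing with the model solution on $\{r_i < |x| < r_o\}$, where all these integrals are computed exactly and the boundary lengths are $2\pi r_i$ and $2\pi r_o$, the inequality $W \le W_0$ together with the sign of $\B \ge 0$ should force $|\Gamma_i| \le 2\pi r_i$ and $|\Gamma_o| \ge 2\pi r_o$.

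More precisely, I would look at the function $\Psi$ itself: its level sets foliate $\Omega$, and by the coarea formula $\int_{\{\Psi = t\}} |\na\Psi|^{-1}\,d\sigma$ and $\int_{\{\Psi=t\}}|\na\Psi|\,d\sigma$ relate the lengths of level sets to integrals of $|\na u|$. Since $u$ is a function of $\Psi$ with $du/d\Psi = (\B-\Psi^2)/\Psi = \mp\sqrt{W_0}$, we get $|\na u| = \sqrt{W_0}\,|\na\Psi|$, so $W = W_0|\na\Psi|^2$ and the estimate $W \le W_0$ is exactly $|\na\Psi| \le 1$. Then differentiating $|\{\Psi < t\}|$ and using $|\na\Psi|\le 1$ gives a differential inequality for the function $t \mapsto |\{\Psi < t\}|$ whose comparison with the model (an exact annulus) yields area bounds on the level sets, and in particular on $\Gamma_i = \{\Psi = r_i\}$ and $\Gamma_o = \{\Psi = r_o\}$. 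The monotonicity of $\Psi$ along $\nu$ on each boundary component — $\Psi$ decreasing into $\Omega$ from $\Gamma_o$ and increasing into $\Omega$ from $\Gamma_i$ — is what flips the inequality direction between the inner and outer components.

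The main obstacle, I expect, is controlling the geometry of the level sets $\{\Psi = t\}$ near the boundary and near any critical points of $u$ (equivalently of $\Psi$): a priori $\na\Psi$ could vanish somewhere, the level sets need not be connected, and one needs that $|\{\Psi = r_i\}|$ and $|\{\Psi = r_o\}|$ genuinely equal $|\Gamma_i|$ and $|\Gamma_o|$ as boundary lengths. Since $\Psi^2 \ne \B$ in the interior and $r_i < r_o \le \sqrt\B$ (or $\sqrt\B \le r_i < r_o$), the quantity $du/d\Psi$ does not vanish in $\Omega$, so critical points of $u$ and of $\Psi$ coincide, and one must either rule them out or handle them by a limiting/approximation argument exactly as in the $\ep\to 0$ step of the previous proof, invoking the same {\L}ojasiewicz-type control near $\Gamma_i$ or $\Gamma_o$ when $\Psi^2 = \B$ there. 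Getting the boundary terms to behave correctly under that limit — so that the clean differential inequality for $t\mapsto|\{\Psi<t\}|$ closes up at the endpoints $t = r_i$ and $t = r_o$ — is the delicate point.
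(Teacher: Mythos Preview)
You correctly isolate the crucial reformulation of the gradient estimate as $|\na\Psi|\le 1$ in $\Omega$ with $|\na\Psi|=1$ on $\pa\Omega$, but the level--set/coarea route you then pursue does not yield the stated length bounds.  If one carries out your plan---combining $A'(t)\ge L(t)$ (from coarea and $|\na\Psi|\le 1$) with the divergence identity $-2|\{\Psi<t\}|+|\alpha|\,|\Gamma_i|\le\sqrt{W_0(t)}\,L(t)$---a Gronwall step shows that $t\mapsto f(t)/(\B-t^2)$ is nonincreasing, where $f(t)=-2|\{\Psi<t\}|+|\alpha|\,|\Gamma_i|$.  Evaluating at the endpoints gives
\[
\frac{|\Gamma_o|}{r_o}\;\le\;\frac{|\Gamma_i|}{r_i}\,,
\]
which is exactly the \emph{companion} inequality~\eqref{eq:converse_area_bound} derived later in Subsection~\ref{sub:proof_main} by the divergence theorem, not the bounds $|\Gamma_i|\le 2\pi r_i$ and $|\Gamma_o|\ge 2\pi r_o$ claimed in the proposition.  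No amount of area monotonicity will produce the constant $2\pi$: that number has to come from topology.

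The paper's argument supplies the missing mechanism.  Since $|\na\Psi|^2$ attains its maximum value $1$ on $\Gamma_i$, one has $\langle\na|\na\Psi|^2\,|\,\na\Psi\rangle\le 0$ there; feeding this into the standard formula for the geodesic curvature of a level set of $\Psi$,
\[
\kappa\;=\;\De\Psi\;-\;\DD\Psi\!\left(\frac{\na\Psi}{|\na\Psi|},\frac{\na\Psi}{|\na\Psi|}\right),
\]
and using that $\De\Psi=1/\Psi$ where $|\na\Psi|=1$, gives $\kappa\ge 1/r_i$ pointwise on $\Gamma_i$.  Integrating and invoking the fact that the total curvature of a simple closed curve is $2\pi$ (Gauss--Bonnet) yields $2\pi\ge|\Gamma_i|/r_i$.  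On $\Gamma_o$ the sign of $\langle\na|\na\Psi|^2\,|\,\na\Psi\rangle$ flips, giving $\kappa\le 1/r_o$ and hence $2\pi\le|\Gamma_o|/r_o$.  You hinted at ``geodesic curvature information'' in your opening sentence but then abandoned it; that was in fact the correct thread to pull.
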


\begin{proof}
We will prove the area bound for $\Gamma_i$ only, as the one for $\Gamma_o$ is proven in the same way (up to some sign differences coming from the fact that $\D\Psi$ is pointing outside $\Omega$ on $\Gamma_o$). Let us also suppose for the moment that $\beta\neq 0$, so that the gradient of $u$ does not vanish on $\Gamma_i$. Notice that 
$$
\D\Psi\,=\,\frac{\Psi}{\B-\Psi^2}\D u\,=\,\frac{\D u}{\sqrt{W_0}}
$$
has norm $|\D\Psi|=\sqrt{W/W_0}$ equal to $1$ on $\Gamma_i$ and less than $1$ inside $\Omega$ thanks to~\eqref{eq:gradient_estimate}. In particular, the inequality $\langle\D|\D\Psi|^2\,|\,\D\Psi\rangle\leq 0$ holds on $\Gamma_i$ and the curvature $\kappa$ of $\Gamma_i$ at any point of $\Gamma_i$ satisfies
\begin{equation}
\label{eq:kappa}
\kappa\,=\,\frac{\De\Psi\,-\,\DD\Psi(\D\Psi,\D\Psi)}{|\D\Psi|}
\,=\,\De\Psi\,-\,\langle\D|\D\Psi|^2\,|\,\D\Psi\rangle\geq\De\Psi\,=\,1/\Psi=1/r_i\,.
\end{equation}
Integrating $\kappa\geq r_i$ on $\Gamma_i$, recalling that the total curvature of a simple curve is equal to $2\pi$, we get
$$
2\pi\,\geq |\Gamma_i|/r_i\,.
$$
This concludes the proof of the area bound for $\Gamma_i$. 

Let us now discuss the case $\beta=0$. In this case, one has $W=W_0=0$ on $\Gamma_i$, so $|\D\Psi|=\sqrt{W/W_0}$ is not well defined. Let $p\in\Gamma_i$ and let us extend $u$ to a smooth function in a neighborhood $U$ of $p$. We can do it by means of Whitney Extension Theorem~\cite{Whitney}, that clearly applies to our case since $u$ is smooth up to the boundary (see for instance~\cite[Theorem~6.19]{Gilbarg_Trudinger}). With a slight abuse of notation, let us still call $u$ the smooth function defined on $U$ that coincides with the original $u$ in $\overline{\Omega}\cap U$.
In the spirit of~\cite[Theorem~3.1]{BCM}, we consider the signed distance function $r$ to $\Gamma_i$. If $U$ is taken to be small enough, the function $r$ is known to be smooth and can be used as a coordinate. We can then proceed exactly as in~\cite[Theorem~3.1]{BCM} to show that 
\begin{equation}
\label{eq:expansion_u}
u\,=\,a-r^2+o(r^2)  
\end{equation}
inside $U$. From this, it easily follows that $W=4r^2+o(r^2)$ and $W_0=4r^2+o(r^2)$, hence $|\D\Psi|$ goes to $1$ as we approach $\Gamma_i$. We can then mimik the argument given for the case  $\beta\neq 0$: the first equality in~\eqref{eq:kappa} still holds for the curvature of the level sets of $u$ close to $\Gamma_i$, therefore at the limit we obtain again that the curvature of $\Gamma_i$ is greater than or equal to $1/r_i$, from which one concludes.
\end{proof}

\subsection{Proof of Theorem~\ref{thm:main}}
\label{sub:proof_main}

In this subsection, we focus on the case $a<b$ and we prove Theorem~\ref{thm:main}. For simplicity, let us start by discussing the case where $\D u$ does not vanish on the boundary of $\Omega$, namely for the moment let us assume $\beta\neq 0$ (notice that we know that $\alpha$ is strictly positive in the case $a<b$, so $\alpha$ cannot vanish).
Using the divergence theorem and recalling that $\De u=-2$, we compute
\begin{align}
\notag
\int_{\Omega}\frac{2}{\B-\Psi^2}\,\rmd\mu\,=\,\int_{\Omega} -\frac{\De u}{\B-\Psi^2}\,\rmd\mu\,&=\,\int_{\Omega}\left\langle\D\left(\frac{1}{\B-\Psi^2}\right)\,\bigg|\,\D u\right\rangle\,\rmd\mu\,-\,\int_{\pa \Omega}\frac{\langle \D u\,|\,\nu\rangle}{\B-\Psi^2}\,\rmd\sigma
\\
\label{eq:divegence_theorem}
&=\,\int_{\Omega}\frac{2\,\Psi^2}{(\B-\Psi^2)^3}\,|\D u|^2\,\rmd\mu\,-\,\int_{\pa \Omega}\frac{\langle \D u\,|\,\nu\rangle}{\B-\Psi^2}\,\rmd\sigma\,,
\end{align}
where $\nu$ is the outward unit normal to $\pa\Omega=\Gamma_i\sqcup\Gamma_o$.
Since we are in the case $a<b$, then  $\nu=-\D u/|\D u|$ on $\Gamma_i$ and $\nu=\D u/|\D u|$ on $\Gamma_o$. Setting $W=|\D u|^2$ as usual, and recalling the expression~\eqref{eq:W0} of $W_0$ in terms of $\Psi$, the identity above can be written as
\begin{equation}
\label{eq:crucial_mon}
\int_{N_\ep}\frac{2\,\Psi^2}{(\B-\Psi^2)^3}\,(W_0-W)\,\rmd\mu\,=\,\int_{\Gamma_i}\frac{|\D u|}{\B-\Psi^2}\,\rmd\sigma\,-\,\int_{\Gamma_o}\frac{|\D u|}{\B-\Psi^2}\,\rmd\sigma\,.
\end{equation}

Notice that the left hand side is nonnegative because of~\eqref{eq:gradient_estimate}. Furthermore, since we are focusing on the case $a<b$, we have $\Psi<r_o<\sqrt{\B}$ hence
$$
\frac{|\D u|}{\B-\Psi^2}\,=\,\frac{1}{\Psi}\sqrt{\frac{W}{W_0}}\,.
$$
Since by construction $W=W_0$ on $\pa\Omega$, from~\eqref{eq:crucial_mon} we immediately deduce 
\begin{equation}
\label{eq:converse_area_bound}
0\leq \frac{|\Gamma_i|}{r_i}-\frac{|\Gamma_o|}{r_o}\,.
\end{equation}
On the other hand, the area bounds in Proposition~\ref{pro:area_bound_Gamma} give us the opposite inequality.
Therefore the equality must hold in~\eqref{eq:converse_area_bound}, and from~\eqref{eq:crucial_mon} in particular we obtain that $W\equiv W_0$ in $\Omega$. The result now follows from the rigidity statement in Theorem~\ref{thm:gradient_estimate}.

It remains to discuss the case where $\beta$ is equal to zero. In this case, the boundary term in~\eqref{eq:divegence_theorem} is ill-defined, as one has $\Psi^2=r_o^2=\B$ on $\Gamma_o$. To avoid this problem, we just need to apply the divergence theorem in $\Omega\setminus\{u>b-\ep\}$, and then take the limit as $\ep\to 0$. From~\eqref{eq:expansion_u}
it follows easily that the quantity $\langle \D u\,|\,\nu\rangle/(\B-\Psi^2)$ goes to $1/\Psi=1/r_o$ as we approach $\Gamma_o$. The proof then proceeds exactly as in the case $\beta\neq 0$.

\section{Refined Pohozaev}
\label{sec:poho}

In this section we focus on the case $a>b$ and we prove Theorem~\ref{thm:reichel} under the additional hypothesis that $2a+\alpha^2\leq 2b+\beta^2$. Let us give the precise statement for the convenience of the reader.

\begin{theorem}
\label{thm:main2}
Let $(\Omega,u)$ be a solution to problem~\eqref{eq:problem} with $E_i\neq\emptyset$. Suppose further that $a>b$,  $b<u<a$ in $\Omega$ and $2a+\alpha^2\leq 2b+\beta^2$. Then $\Omega$ is an annulus and $u$ is rotationally symmetric with $\pa u/\pa |x|<0$.
\end{theorem}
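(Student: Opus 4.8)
The plan is to couple the Pohozaev identity~\eqref{eq:poho} with the isoperimetric inequality and with the estimates of Section~\ref{sec:exploit}. By Proposition~\ref{pro:acceptable} we have $\alpha\geq 0$, $\beta<0$ and $4a+\alpha^2>4b+\beta^2$; adding the hypothesis $2a+\alpha^2\leq 2b+\beta^2$, Lemma~\ref{le:comparison2} furnishes a model solution $u=\A-\frac{1}{2}|x|^2+\B\log|x|$ with $\B\geq 0$ and $\sqrt{\B}\leq r_i<r_o$, with associated pseudo-radial function $\Psi:\Om\to[r_i,r_o]$ (see~\eqref{eq:pr}) and functions $W,W_0$ as in~\eqref{eq:W0}. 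Theorem~\ref{thm:gradient_estimate} gives $W\leq W_0$ in $\Om$, that is $|\D\Psi|=\sqrt{W/W_0}\leq 1$, and Proposition~\ref{pro:area_bound_Gamma} gives $|\Gamma_i|\leq 2\pi r_i$ and $|\Gamma_o|\geq 2\pi r_o$. Combining the first area bound with the isoperimetric inequality $|\Gamma_i|^2\geq 4\pi|E_i|$ already gives $|E_i|\leq\pi r_i^2$; moreover~\eqref{eq:kappa} shows that $\Gamma_i$ has curvature at least $1/r_i$, hence is convex.

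The core step is to rewrite~\eqref{eq:poho} through $\Psi$. Since $u$ is a function of $\Psi$ alone, setting $V(t):=|\{\Psi<t\}|$ (so that $V(r_i)=|E_i|$ and $V(r_o)=|E_o|$), the co-area formula and one integration by parts turn~\eqref{eq:poho} into
\begin{equation}
\int_{r_i}^{r_o}\frac{4(t^2-\B)}{t}\,V(t)\,d t\,=\,\beta^2\,|E_o|\,-\,\alpha^2\,|E_i|\,,
\end{equation}
and the identical computation for the model yields the same identity with $V(t)$, $|E_o|$, $|E_i|$ replaced by $\pi t^2$, $\pi r_o^2$, $\pi r_i^2$. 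The hypothesis $2a+\alpha^2\leq 2b+\beta^2$ plays here its second role: it forces $\B\geq 0$, hence $t\geq r_i\geq\sqrt{\B}$ on $[r_i,r_o]$, so that the weight $4(t^2-\B)/t$ is nonnegative.

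I would then exploit the good behaviour of $V$. Away from the critical values of $u$, the co-area formula gives $V'(t)=\int_{\{\Psi=t\}}|\D\Psi|^{-1}\,d\sigma\geq|\{\Psi=t\}|\geq 2\sqrt{\pi\,V(t)}$, the last inequality by applying the isoperimetric inequality to the topological disk $\{\Psi<t\}$; from $V'\geq 2\sqrt{\pi V}$ one deduces that $t\mapsto V(t)/t^2$ is nondecreasing on $[r_i,r_o]$, so $V(t)\geq (|E_i|/r_i^2)\,t^2$, with equality throughout only if $|\D\Psi|\equiv 1$. Inserting these one-sided bounds and the area bounds into the identity above and its model counterpart, one is led to a dichotomy on the sign of $|E_o|-\pi r_o^2$: the case $|E_o|>\pi r_o^2$ is excluded, because the inequality $V(t)/t^2\leq|E_o|/r_o^2$ then lets one bound the left-hand side from above and obtain a contradiction with $|E_i|\leq\pi r_i^2$. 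Hence $|E_o|\leq\pi r_o^2$ and therefore $V(t)\leq\pi t^2$ on the whole interval, and the remaining task is to conclude $|E_i|=\pi r_i^2$. Once this is established, $\Gamma_i$ is a round circle of radius $r_i$ and, reading~\eqref{eq:kappa} backwards, $\langle\D|\D\Psi|^2\,|\,\D\Psi\rangle=0$ on $\Gamma_i$; since $W=W_0$ on $\pa\Om$ and $W_0$ is constant there, this forces $\D(W-W_0)=0$ along $\Gamma_i$, and the Hopf Lemma applied to~\eqref{eq:elliptic_ineq_F_simplified} yields $W\equiv W_0$, so that $(\Om,u)$ is rotationally symmetric by the rigidity clause of Theorem~\ref{thm:gradient_estimate}; being radial and of the form~\eqref{eq:models} with $r_i\geq\sqrt{\B}$, the solution then satisfies $\pa u/\pa|x|<0$. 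The degenerate cases $\alpha=0$ or $\beta=0$, where $\Psi^2=\B$ on a boundary component, are handled by working on a slightly shrunk domain and passing to the limit via the expansion~\eqref{eq:expansion_u}, exactly as in Subsection~\ref{sub:proof_main}.

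The step I expect to be the main obstacle is the control of the outer boundary $\Gamma_o$. Unlike at $\Gamma_i$, the bound $|\Gamma_o|\geq 2\pi r_o$ points the ``wrong way'' for the isoperimetric inequality and gives no upper bound on $|E_o|$; thus $E_o$ cannot be controlled directly, and the sign of $|E_o|-\pi r_o^2$ — together with the final equality $|E_i|=\pi r_i^2$ — must be squeezed out of the Pohozaev identity itself, which requires keeping careful track of all the inequalities. It is exactly this asymmetry that makes the divergence-theorem argument of Subsection~\ref{sub:proof_main} break down for $a>b$, and that makes the present proof less elementary than that of Theorem~\ref{thm:main}. A secondary, more technical point — as in~\cite{ABM} — is the justification of the co-area computations and of the connectedness of the level sets $\{\Psi=t\}$, for which one leans on $W\leq W_0$ and on {\L}ojasiewicz-type estimates to rule out pathological behaviour of $u$.
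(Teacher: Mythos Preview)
Your overall architecture---set up the comparison via Lemma~\ref{le:comparison2}, use the gradient estimate and the area bounds, then extract a rigidity from a Pohozaev-type identity combined with the isoperimetric inequality---is exactly the paper's architecture. The difference lies entirely in the ``Pohozaev step'', and there your argument has a genuine gap.

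The specific problem is the claim that $V'\geq 2\sqrt{\pi V}$ implies that $t\mapsto V(t)/t^2$ is nondecreasing. Writing $f=\sqrt{V/\pi}$, the hypothesis becomes $f'\geq 1$; differentiating $f/t$ one needs $tf'\geq f$, which is \emph{not} a consequence of $f'\geq 1$ (take $f(r_i)=r_i$, let $f'$ equal $2$ on a short interval and $1$ afterwards: then $f/t$ rises and then falls). What \emph{does} follow from $f'\geq 1$ together with $f(r_i)\leq r_i$ is the one-sided bound $f(t)/t\geq f(r_i)/r_i$, i.e.\ $V(t)\geq(|E_i|/r_i^2)\,t^2$; but plugging this lower bound into your integral identity only returns the tautology $|E_o|/r_o^2\geq |E_i|/r_i^2$. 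The crucial step of your dichotomy---using $V(t)/t^2\leq |E_o|/r_o^2$ when $|E_o|>\pi r_o^2$---relies on monotonicity from the right endpoint, and that direction requires precisely $f(r_o)\leq r_o$, i.e.\ $|E_o|\leq\pi r_o^2$, which is what you are trying to prove. So the case $|E_o|>\pi r_o^2$ is not excluded by your argument as written, and the conclusion $|E_i|=\pi r_i^2$ is not reached. You yourself flag this as the main obstacle; the point is that the tool you propose (monotonicity of $V/t^2$) does not do the job.

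The paper closes this gap by a different, more algebraic device. Instead of passing to the distribution function $V(t)$, it introduces an auxiliary function $\ffi=\ffi(u)$ (with a free constant $k$), chosen so that $(2\ffi-4u)/\dot\ffi=W_0$; one divergence-theorem integration by parts then produces directly
\[
\int_\Omega \dot\ffi\,(W-W_0)\,d\mu
\;=\;
(4b+\beta^2)|E_o|-(4a+\alpha^2)|E_i|+\alpha\ffi(a)|\Gamma_i|+\beta\ffi(b)|\Gamma_o|\,,
\]
and after applying the isoperimetric inequality to $|E_o|$, the area bounds, and the identity $-2|\Omega|=\alpha|\Gamma_i|+\beta|\Gamma_o|$, the right-hand side reduces (with the specific choice of $k$ that makes $\dot\ffi\geq 0$ on $[r_i,r_o]$) to a nonnegative multiple of $|\Gamma_i|/r_i-|\Gamma_o|/r_o$. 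Since the left-hand side is $\leq 0$ by the gradient estimate, one obtains $|\Gamma_i|/r_i\geq|\Gamma_o|/r_o$, the reverse of Proposition~\ref{pro:area_bound_Gamma}, and equality $W\equiv W_0$ follows immediately. In other words, the paper never needs to control $|E_o|$ or $V(t)$ pointwise: the free parameter $k$ in $\ffi$ absorbs exactly the slack that your layer-cake approach cannot.

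Your Hopf-lemma endgame (deducing $W\equiv W_0$ from $|E_i|=\pi r_i^2$ via $\langle\D|\D\Psi|^2\,|\,\D\Psi\rangle=0$ on $\Gamma_i$) is a nice idea and would work if you could reach $|E_i|=\pi r_i^2$; the paper does not need it because it obtains $W\equiv W_0$ directly from the integral equality.
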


As discussed, all the arguments in Subsections~\ref{sub:grad_est} and~\ref{sub:area_bounds} still work in this case.
Unfortunately however, the strategy employed in Subsection~\ref{sub:proof_main} to complete the proof for $a<b$, does not work when $a>b$. In fact, one can check that in this case the inequality that one gets at the end has the opposite sign of~\eqref{eq:converse_area_bound}, hence it does not combine with the area bounds from Proposition~\ref{pro:area_bound_Gamma} anymore.
We will then need to exploit a more delicate argument based on sharp estimates coming from the Pohozaev identity and the isoperimetric inequality.

Let us start by writing
$$
4\,u\,=\,2\ffi+(4u-2\ffi)\,=\,-\ffi\De u+(4u-2\ffi)\,,
$$
where $\ffi$ is a function of $u$ to be chosen later. We then use the divergence theorem to write the following sequence of identities (we denote by $\dot\ffi$ the derivative of $\ffi$ with respect to $u$)
\begin{align}
\notag
\int_{\Omega}4u\, d\mu\,&=\,\int_{\Omega}[-\ffi\De u+(4u-2\ffi)]d\mu
\\
\notag
&=\,\int_{\Omega}[\dot\ffi |\D u|^2+4u-2\ffi]d\mu\,-\,\int_{\pa\Omega}\ffi\frac{\pa u}{\pa\nu}d\sigma
\\
\label{eq:aux_integral}
&=\,\int_{\Omega}\dot\ffi\left(W-\frac{2\ffi-4u}{\dot\ffi}\right)d\mu\,-\,\alpha \ffi(a)|\Gamma_i|
\,-\,\beta \ffi(b)|\Gamma_o|\,.
\end{align}
If we choose
\begin{equation}
\label{eq:ffi}
\ffi\,=\,2u-\frac{\Psi^4-4\B\Psi^2+4\B^2\log\Psi+k}{2(\B-\Psi^2)}\,,
\end{equation}
where $k\in\R$ is a constant,
then one can check that it holds $(2\ffi-4u)/\dot\ffi=W_0$. Combining~\eqref{eq:aux_integral} with the Pohozaev identity~\eqref{eq:poho}, with this choice of $\ffi$ we obtain
\begin{equation*}
\int_{\Omega}\dot\ffi\left(W-W_0\right)d\mu\,=\,(4\,b+\beta^2)\,|E_o|\,-\,(4\,a+\alpha^2)\,|E_i|\,+\,\alpha \ffi(a)|\Gamma_i|
\,+\,\beta \ffi(b)|\Gamma_o|\,.
\end{equation*}
Since $|\Omega|=|E_o|-|E_i|$, we can write the above equation in the following way:
\begin{equation*}
\int_{\Omega}\dot\ffi\left(W-W_0\right)d\mu\,=\,(4\,b+\beta^2-4\,a-\alpha^2)\,|E_o|\,+\,(4\,a+\alpha^2)\,|\Omega|\,+\,\alpha \ffi(a)|\Gamma_i|
\,+\,\beta \ffi(b)|\Gamma_o|\,.
\end{equation*}
Recall from Proposition~\ref{pro:acceptable} that $4a+\alpha^2>4b+\beta^2$. We can then combine the isoperimetric inequality, the area bounds in Proposition~\ref{pro:area_bound_Gamma} and the identity 
$$
-2\,|\Omega|\,=\,\int_{\Omega}\De u\, d\mu\,=\,\int_{\pa\Omega}\frac{\pa u}{\pa\nu}d\sigma\,=\,\alpha\,|\Gamma_i|+\beta\,|\Gamma_o|
$$
to obtain the following inequality
\begin{multline*}
\int_{\Omega}\dot\ffi\left(W-W_0\right)d\mu\,\geq
\\
\left[\frac{4\,b+\beta^2-4\,a-\alpha^2}{2}r_o+\beta \ffi(b)-2a\beta-\frac{\alpha^2}{2}\beta \right]\,|\Gamma_o|\,+\,\left[\ffi(a)\alpha-2a\alpha-\frac{\alpha^3}{2}\right]|\Gamma_i|\,.
\end{multline*}
Recall that the constants appearing in the above formula are related as follows:
\begin{equation*}
a\,=\,\A-\frac{r_i^2}{2}+\B\log r_i\,,\quad b\,=\,\A-\frac{r_o^2}{2}+\B\log r_o\,,\quad \alpha\,=\,r_i-\frac{\B}{r_i}\,,\quad \beta\,=\,\frac{\B}{r_o}-r_o\,.
\end{equation*}
Using these identities and formula~\eqref{eq:ffi} for $\ffi$, with some computations we can rewrite the above inequality as follows:
\begin{equation}
\label{eq:case1}
\int_{\Omega}\dot\ffi\left(W-W_0\right)d\mu\,\geq\,\frac{\B(4a+\alpha^2-4\A-\B)+k}{2}\,\left(\frac{|\Gamma_i|}{r_i}\,-\,\frac{|\Gamma_o|}{r_o}\right)\,.
\end{equation}
Let us now analyze $\dot\ffi$. Differentiating~\eqref{eq:ffi}, we get
$$
\dot\ffi\,=\,\frac{\Psi^2}{(\B-\Psi^2)^3}\left[4\B\Psi^2-\Psi^4-4\B^2\log\Psi-k\right]\,.
$$
One can check that the quantity in square brakets is monotonically decreasing in $\Psi$. Since we know from Lemma~\ref{le:comparison2} that $\Psi^2\geq r_i^2\geq \B$, in order for $\dot\ffi$ to be positive in $\Omega$ it is then sufficient to choose
$$
k\,\geq\,4\B r_i^2-r_i^4-4\B^2\log r_i\,=\,4\A\B+\B^2-4a\B-\alpha^2 r_i^2
$$
Choosing $k$ exactly equal to the above value, recalling from~\eqref{eq:gradient_estimate} that $W\leq W_0$, we then get from~\eqref{eq:case1}:
$$
0\,\geq\,\frac{\alpha^2(\B-r_i^2)}{2}\,\left(\frac{|\Gamma_i|}{r_i}\,-\,\frac{|\Gamma_o|}{r_o}\right)
$$
Since $r_i^2\geq\B$, this implies
$$
\frac{|\Gamma_i|}{r_i}\,-\,\frac{|\Gamma_o|}{r_o}\geq 0
$$
But we have the opposite inequality from Proposition~\ref{pro:area_bound_Gamma}, therefore everything must be an equality. As a consequence, $W=W_0$ on the whole $\Omega$ and we conclude using the rigidity statement in Theorem~\ref{thm:gradient_estimate}.

\section{Further comments}
\label{sec:comments}

In the previous sections we have shown that we are able to deal with the case $a>b$ and prove Theorem~\ref{thm:reichel} if we assume the additional hypothesis $2a+\alpha^2\leq 2b+\beta^2$. While this hypothesis is somewhat restrictive, it still allows to get some nontrivial applications. As a particularly relevant example, we now argue that our weaker version of  Theorem~\ref{thm:reichel}, namely Theorem~\ref{thm:main2}, is still strong enough to deal with the cases of interest in~\cite{ABM}. There, Theorem~\ref{thm:reichel} was invoked in the proof of~\cite[Theorem~B]{ABM} on a domain $\Omega_o$ (the outer domain, in the terminology of~\cite{ABM}) for a function $u$ satisfying problem~\eqref{eq:problem} with $\alpha=0$, $b=0$ and $\beta^2/2a\geq 1$ (the latter inequality followed from~\cite[Theorem~2.1]{ABM}). Under those hypotheses, the inequality $2a+\alpha^2\leq 2b+\beta^2$ is trivially satisfied. In other words, Theorem~\ref{thm:main2} is enough for the intended applications in~\cite{ABM}.


The rest of this section is devoted to further comment on the hypothesis $2a+\alpha^2\leq 2b+\beta^2$. Specifically, we discuss what can go wrong when this hypothesis is not in place and what are the complications that one would need to overcome in order to prove Theorem~\ref{thm:reichel} in full generality using our approach. 

The first basic step that one needs in order to start our comparison argument is of course to have a model to compare with. In other words, given a solution $(\Omega,u)$ of problem~\eqref{eq:problem} for some $a,b,\alpha,\beta$ with $a>b$, we need to have a model solution~\eqref{eq:models} solving the same problem. This is granted by Lemma~\ref{le:comparison2} when $2a+\alpha^2\leq 2b+\beta^2$. Unfortunately, an analogous result seems harder to prove when $2a+\alpha^2> 2b+\beta^2$.

In case we are able to show that there is a model solution to compare with, there is still another crucial complication, namely the proof of Theorem~\ref{thm:gradient_estimate}. In fact, in order to prove the gradient estimate $W\leq W_0$, we relied on the Maximum Principle applied to the elliptic inequality~\eqref{eq:elliptic_ineq_F_simplified}. Unfortunately, when $\B<0$ (this can happen when $2a+\alpha^2> 2b+\beta^2$), the zeroth order term of~\eqref{eq:elliptic_ineq_F_simplified} has the wrong sign, hence we cannot apply the Maximum Principle. It is possible with some work to find a workaround, at least under the additional hypothesis that $r_i^2\geq-\B$.

\begin{theorem}[Gradient Estimates, $\B< 0$]
\label{thm:gradient_estimate_2}
Let $(\Om,u)$ be a solution to problem~\eqref{eq:problem} and suppose that $b<u<a$. Suppose that there exist $\A,\B,r_o,r_i$ such that the corresponding model solution~\eqref{eq:models} solves the problem for the same $a,b,\alpha,\beta$. Let $W$, $W_0$ be defined as in~\eqref{eq:W0}. If $r^2_i\geq -\B>0$,
then it holds 
\begin{equation}
\label{eq:gradient_estimate_2}
W \, \leq \,  W_0 \quad\hbox{in }\Omega\,,
\end{equation}
Moreover, if $W=W_0$ at some point in the interior of $\Omega$, then $(\Omega,u)$ is rotationally symmetric.
\end{theorem}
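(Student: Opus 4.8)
The proof of Theorem~\ref{thm:gradient_estimate_2} should follow the same scheme as the proof of Theorem~\ref{thm:gradient_estimate}, the point being to find a different auxiliary function $\gamma=\gamma(\Psi)$ for which the zeroth order coefficient in~\eqref{eq:elliptic_ineq_F} has the right sign even when $\B<0$. Recall that the choice $\gamma=\Psi^2/|\B-\Psi^2|$ was dictated by the desire to kill the term $[\gamma'/\gamma-2\B/(\Psi(\B-\Psi^2))]$ that multiplies both $\langle\na F_\gamma|\na u\rangle$ and $F_\gamma$; this led to the clean inequality~\eqref{eq:elliptic_ineq_F_simplified}, whose zeroth order coefficient $-8\B\Psi^2|\D u|^2/(\B-\Psi^2)^4$ is negative precisely when $\B\geq 0$ and positive when $\B<0$. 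So with that specific $\gamma$ the Maximum Principle fails. The idea is therefore \emph{not} to cancel that first-order bracket completely, but to choose $\gamma$ so that the full zeroth order coefficient in~\eqref{eq:elliptic_ineq_F}, namely
$$
\frac{2\Psi}{\Psi^2-\B}\left[\frac{\gamma'}{\gamma}-\frac{2\B}{\Psi(\B-\Psi^2)}\right]+\frac{W}{W_0}\left[\left(\frac{\gamma'}{\gamma}\right)'-\left(\frac{\gamma'}{\gamma}\right)^2+\frac{\Psi^2+5\B}{\Psi(\B-\Psi^2)}\frac{\gamma'}{\gamma}-\frac{4\B^2}{\Psi^2(\B-\Psi^2)^2}\right],
$$
is nonpositive on $\Omega$ (using $W\leq W_0$ is not yet available, so one works with $W/W_0$ as an unknown bounded coefficient, or one first establishes the sign of the bracket multiplying $\langle\na F_\gamma|\na u\rangle$ and treats the zeroth order term via its extremal values in $W/W_0\in[0,?]$).

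\medskip

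\textbf{Key steps.} First, set $t=\gamma'/\gamma$ and rewrite the zeroth order coefficient as a function of $t$, $\Psi$ and the ratio $W/W_0$. Second, look for $\gamma$ of the form $\gamma=\Psi^p|\B-\Psi^2|^q$ (a two-parameter ansatz generalizing the previous $p=2$, $q=-1$), compute $t=p/\Psi-2q\Psi/(\B-\Psi^2)$, and plug into the coefficient; the hypothesis $r_i^2\geq-\B>0$ guarantees $\Psi^2\geq r_i^2\geq-\B$, hence $\Psi^2-\B\geq\Psi^2+|\B|\geq 2|\B|>0$ throughout $\Omega$, which is exactly the region where one can hope to control signs. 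Third, choose $p,q$ (I expect something like keeping $q=-1$ but shifting $p$, or interpolating) so that on $\{\Psi^2\geq-\B\}$ the coefficient is $\leq 0$; the constraint $\Psi^2\geq -\B$ is what makes certain otherwise-indefinite terms definite. Fourth, once such a $\gamma$ is found, $F_\gamma=\gamma(W-W_0)$ satisfies an elliptic inequality of the form $\De F_\gamma+\langle b|\na F_\gamma\rangle+cF_\gamma\geq 0$ with $c\leq 0$, $F_\gamma=0$ on $\pa\Omega$ (by the same boundary analysis as before, including the Reverse {\L}ojasiewicz argument of~\cite[Theorem~2.2]{BCM} when $\Psi^2=\B$ on a boundary component — though note if $r_i^2>-\B$ strictly this degeneracy at $\Gamma_i$ cannot occur, only possibly at $\Gamma_o$ if $r_o^2=\B$, which is incompatible with $\B<0$, so in fact the boundary is clean here), so the Maximum Principle gives $F_\gamma\leq 0$, i.e. $W\leq W_0$; and the Strong Maximum Principle gives the rigidity as in Theorem~\ref{thm:gradient_estimate}, concluding via~\cite[Theorem~4.2]{Borghini}.

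\medskip

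\textbf{Main obstacle.} The hard part is the algebra of step three: verifying that a single choice of $\gamma$ (equivalently, of $p,q$) makes the zeroth order coefficient nonpositive \emph{uniformly} on the range $\Psi^2\in[r_i^2,r_o^2]\subset[-\B,\infty)$ and for all admissible values of the ratio $W/W_0$. The term $\tfrac{W}{W_0}\bigl[-4\B^2/(\Psi^2(\B-\Psi^2)^2)\bigr]$ is positive (since $\B^2>0$) and fights against us, so the remaining bracketed terms must dominate it after multiplication by $W/W_0\le$ (some bound); one likely needs an a priori bound on $W/W_0$, or a more clever $\gamma$ that is not a pure power, precisely to absorb this term — this is the delicate ``with some work'' alluded to in the text, and is where the hypothesis $r_i^2\geq-\B$ is essential, as it forces $\B-\Psi^2$ to stay bounded away from zero with a definite sign, so that $\B/(\B-\Psi^2)\in(-1,0)$ and the bad term is controlled in size. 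Once the right $\gamma$ is pinned down, the rest is a routine replay of the $\B\geq 0$ proof.
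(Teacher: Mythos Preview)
Your framework is right---same elliptic inequality~\eqref{eq:elliptic_ineq_F}, same boundary analysis (and your observation that $\B<0$ makes the boundary ``clean'' is correct)---but the proposed resolution of the sign problem has a real gap. A two-parameter power ansatz $\gamma=\Psi^p|\B-\Psi^2|^q$ does not suffice: with any such $\gamma$ the zeroth-order coefficient in~\eqref{eq:elliptic_ineq_F} still contains the term $-\tfrac{W}{W_0}\cdot\tfrac{4\B^2}{\Psi^2(\B-\Psi^2)^2}$, which is positive and depends on the unknown ratio $W/W_0$. You correctly flag this as the main obstacle, but ``a more clever $\gamma$'' alone will not absorb it, because as long as the inequality is \emph{linear} in $F_\gamma$ you are forced to control that ratio a priori, which you cannot.

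The paper's key idea is an algebraic rewriting you are missing. Using the identity $W/W_0=1+F_\gamma/(\gamma W_0)$, one reshuffles the two zeroth-order terms of~\eqref{eq:elliptic_ineq_F} so that one of them becomes \emph{quadratic} in $F_\gamma$ (hence of definite sign regardless of whether $F_\gamma\gtrless 0$) and the other remains linear but with a \emph{different} bracket multiplying $\tfrac{W}{W_0}F_\gamma$. One then chooses $\gamma$ to kill this new bracket; the resulting first-order ODE integrates to $\gamma=\Psi^2/|2\Psi^2-(\Psi^2+\B)(\log\Psi-k)|$, which is not a power and carries a free constant $k$. What survives is
\[
\De F_\gamma\;\ge\;c(\Psi)\,\langle\na F_\gamma\mid\na u\rangle\;-\;c(\Psi)\,\frac{F_\gamma^{2}}{\gamma W_0},
\]
with the \emph{same} coefficient $c(\Psi)$ in front of both remaining terms. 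The hypothesis $r_i^2\ge -\B$ is used exactly here: it guarantees $\Psi^2+\B\ge 0$ on $\Omega$, so for $k$ large enough $c(\Psi)<0$ uniformly, the quadratic term is nonnegative and can be dropped, and one is left with $\De F_\gamma - c(\Psi)\langle\na F_\gamma|\na u\rangle\ge 0$ with \emph{no zeroth-order term at all}. The Maximum Principle then applies immediately. So the missing ingredient is not a better power but the linear-to-quadratic conversion that eliminates the zeroth-order term entirely.
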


\begin{proof}
We start by observing that we can write~\eqref{eq:elliptic_ineq_F} as
\begin{multline}
\label{eq:elliptic_ineq_F_version2}
\De F_\gamma\,\geq\,\frac{2\Psi}{\Psi^2-\B}\left[\frac{\gamma'}{\gamma}\,+\,\frac{2\,\B}{\Psi(\Psi^2-\B)}\right]\langle\na F_\gamma|\na u\rangle\,-\,\frac{2\,\Psi}{\Psi^2-\B}\,\left[\frac{\gamma'}{\gamma}\,+\,\frac{2\,\B}{\Psi(\Psi^2-\B)}\right]\,\frac{F_\gamma^2}{\gamma W_0}
\\
+\,\frac{W}{W_0}\,\left[\left(\frac{\gamma'}{\gamma}\right)'\,-\,\left(\frac{\gamma'}{\gamma}\right)^2\,+\,\frac{\Psi^2-5\B}{\Psi(\Psi^2-\B)}\frac{\gamma'}{\gamma}\,+\,\frac{4\,\B}{\Psi^2(\Psi^2-\B)}\right]F_\gamma\,.
\end{multline}

If we ask the quantity in the latter square bracket to be equal to zero, we obtain the following formula for $\gamma$
$$
\gamma\,=\,\frac{\Psi^2}{\left|2\Psi^2-(\Psi^2+\B)(\log\Psi- k)\right|}\,.
$$
where $k\in\R$ is a constant.
In particular we get
\begin{equation}
\label{eq:first_square_bracket_term}
\frac{\gamma'}{\gamma}\,+\,\frac{2\,\B}{\Psi(\Psi^2-\B)}\,=\,\frac{\Psi^4+4\B \Psi^2-\B^2-4\B\Psi^2(\log\Psi- k)}{\Psi(\Psi^2-\B)[2\Psi^2-(\Psi^2+\B)(\log\Psi- k)]}\,.
\end{equation}
Since $\Psi^2\geq r_i^2\geq -\B>0$, choosing $k$ to be big enough we have that the right hand side of~\eqref{eq:first_square_bracket_term} is negative at any point in $\Omega$. As a consequence, for such $k$, from~\eqref{eq:elliptic_ineq_F_version2} we get
\begin{equation*}
\De F_\gamma\,-\,2
\frac{\Psi^4+4\B \Psi^2-\B^2-4\B\Psi^2(\log\Psi- k)}{(\Psi^2-\B)^2[2\Psi^2-(\Psi^2+\B)(\log\Psi- k)]}
\langle\na F_\gamma|\na u\rangle\,\geq\,0\,.
\end{equation*}
We can then apply the Maximum Principle to find out that $F_\gamma$ is nonpositive on the whole $\Omega$. This concludes the proof.
\end{proof}

The question still remains open of whether it is possible to remove the hypothesis $r_i^2\geq -\B$ in the above theorem. Looking at the proof, it is clear that the freedom in the choice of $k$ should allow to weaken that hypothesis. However, it does seem hard to remove it entirely. 

The rest of our arguments does not seem to depend heavily on the hypothesis $2a+\alpha^2\leq 2b+\beta^2$. In other words, even in the case $2a+\alpha^2> 2b+\beta^2$, our method should allow to conclude the desired rigidity, provided we have a model solution to compare with and the gradient estimate $W\leq W_0$ is in force.
In order to prove Theorem~\ref{thm:reichel} in full generality using our approach, these seem to be the two issues that are left to address.


\bibliographystyle{plain}
\bibliography{biblio}

\end{document}